\numberwithin{equation}{section}
\newtheorem{thm}{Theorem}[section]
\newtheorem*{thm*}{Theorem}
\newtheorem*{prop*}{Proposition}
\newtheorem*{lem*}{Lemma}
\newtheorem{lem}[thm]{Lemma}
\newtheorem{prop}[thm]{Proposition}
\newtheorem{cor}[thm]{Corollary}
\newtheorem{defn}[thm]{Definition}
\newtheorem{rem}[thm]{Remark}
\newcommand\R{{\mathbb R}}
\newcommand{\EE}{\mathbb{E}}
\newcommand{\NN}{\mathbb{N}}
\newcommand{\PP}{\mathbb{P}}
\newcommand{\RR}{\mathbb{R}}
\renewcommand{\SS}{\mathbb{S}}
\newcommand{\ZZ}{\mathbb{Z}}
\newcommand{\cC}{\mathcal C}
\newcommand{\cF}{\mathcal F}
\newcommand{\cR}{\mathcal R}
\definecolor{darkred}{rgb}{0.9,0.1,0.1}
\renewcommand{\epsilon}{\varepsilon}
\begin{document}
\title{Emergence of fractional Gaussian free field correlations in subcritical long-range Ising models}
\author{Trishen Gunaratnam, Romain Panis}
\institute{\large \textbf{Universit\'e de Gen\`eve} \\ 
{trishen.gunaratnam@unige.ch} \\  {romain.panis@unige.ch}}

\maketitle

\begin{abstract}
    We study corrections to the scaling limit of subcritical long-range Ising models with (super)-summable interactions on $\mathbb Z^d$. For a wide class of models, the scaling limit is known to be white noise, as shown by Newman (1980). In the specific case of couplings $J_{x,y}=|x-y|^{-d-\boldsymbol{\alpha}}$, where $\boldsymbol{\alpha}>0$ and $|\cdot|$ is the Euclidean norm, we find an emergence of fractional Gaussian free field correlations in appropriately renormalised and rescaled observables. The proof exploits the exact asymptotics of the two-point function, first established by Newman and Spohn (1998), together with the rotational symmetry of the interaction.  
\end{abstract}

\section{Introduction}
In this article, we study the emergence of fractional Gaussian free field (FGFF) correlations in the scaling limit of long-range Ising models below the critical point. 

Let $\ZZ^d$ be the hypercubic lattice of dimension $d \geq 2$. For $\boldsymbol{\alpha}, \mathbf{C} > 0$, let $J=(J_{x,y})_{\lbrace x,y\rbrace\subset \mathbb Z^d}$ be a collection of ferromagnetic interactions defined by
\begin{equs}\label{eq: interaction}
J_{x,y}
:=
\frac{\bf{C}}{|x-y|^{d+\boldsymbol{\alpha}}},
\end{equs}
where $|\cdot|$ is the Euclidean norm\footnote{The choice of the Euclidean norm, which is spherically symmetric, is not just a technicality. See the discussion after Theorem \ref{thm: white noise} and also Remark \ref{rem: spherical}} on $\mathbb R^d$.

The Ising model with interaction $J$ is defined on finite subsets $\Lambda \subset \ZZ^d$ as follows. For $\sigma=(\sigma_x)_{x\in \Lambda}\in \lbrace \pm 1\rbrace^\Lambda$, define the Hamiltonian
\begin{equs}
    H_{\Lambda,h}(\sigma):=-\sum_{\lbrace x,y\rbrace\subset \Lambda}J_{x,y}\sigma_x\sigma_y-h\sum_{x\in \Lambda}\sigma_x.
\end{equs}
The finite volume Ising model $\langle \cdot \rangle_{\Lambda,\beta,h}$ at inverse temperature $\beta\geq 0$ and external field $h \in \R$ is the probability measure under which, for each $F:\lbrace \pm 1\rbrace^\Lambda\rightarrow \mathbb R$, 
\begin{equs}
    \langle F\rangle_{\Lambda,\beta,h}:=\dfrac{1}{Z(\Lambda,\beta,h)}\sum_{\sigma\in \lbrace \pm 1\rbrace^\Lambda}F(\sigma)\exp\left(-\beta H_{\Lambda,h}(\sigma)\right),
\end{equs}
where $Z(\Lambda,\beta,h):=\sum_{\sigma\in \lbrace \pm 1\rbrace^\Lambda}\exp\left(-\beta H_{\Lambda,h}(\sigma)\right)$ is the partition function of the model. 

Using Griffiths' inequalities \cite{griffiths1967correlations}, one can obtain the associated infinite volume Ising model with free boundary conditions by taking weak limits of $\langle \cdot \rangle_{\Lambda,\beta,h}$ as $\Lambda\nearrow \mathbb Z^d$. Denote the limiting probability measure by $\langle \cdot \rangle_{\beta,h}$.

It is well known that the Ising model exhibits a phase transition for the vanishing of the \textit{spontaneous magnetisation}. That is, if  
\begin{equs}
    m^*(\beta)
    :=
    \lim_{h\rightarrow 0^+}\langle\sigma_0\rangle_{\beta,h},
\end{equs}
then $\beta_c:=\inf\lbrace \beta>0,\text{ } m^*(\beta)>0\rbrace\in (0,\infty)$. It has been proved in \cite{fisher1967critical} that $\beta_c>0$, whilst the celebrated Peierls argument \cite{peierls1936ising} yields the bound $\beta_c<\infty$. Throughout the rest of the article, we assume that $h=0$.

We are interested in subcritical scaling limits, i.e. when $\beta < \beta_c$. As recalled in the proposition below, in this regime these models still exhibit polynomial decay of spin-spin correlations and thus may have a non-degenerate scaling limit.

\begin{prop}[\cite{newman1998shiba,Ao}]\label{prop: sharp asymptotics} Let $d\geq 2$, $\boldsymbol{\alpha},\mathbf{C}> 0$ in the interaction \eqref{eq: interaction}, and $\beta<\beta_c$. Then, there exists $\delta > 0$ such that for every $x\in \mathbb Z^d\setminus \lbrace 0\rbrace$,
\begin{equs} \label{eq: exact asymptotic}
    \langle \sigma_0\sigma_x\rangle_\beta
    =
    \dfrac{\beta\chi(\beta)^2}{2}\dfrac{\mathbf{C}}{|x|^{d+\boldsymbol{\alpha}}} \Big( 1 + O\big(|x|^{-\delta}\big) \Big),
\end{equs}
where $\chi(\beta):=\sum_{x\in \mathbb Z^d}\langle \sigma_0\sigma_x\rangle_\beta$. As an immediate consequence, we obtain polynomial decay of correlations: there exists $C=C(\beta,J)>0$ such that for all $x\in \mathbb Z^d\setminus \lbrace 0\rbrace$,
\begin{equs} \label{eq: spin decay}
    \langle \sigma_0\sigma_x\rangle_\beta
    \leq
    \frac{C}{|x|^{d+\boldsymbol{\alpha}}}.
\end{equs}
\end{prop}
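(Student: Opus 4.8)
The plan is to derive \eqref{eq: exact asymptotic} as the main statement, from which \eqref{eq: spin decay} follows immediately by absorbing the $O(|x|^{-\delta})$ term into the constant. The starting point is the random walk / random current representation of the two-point function together with the Simon--Lieb-type inequality. More precisely, in the subcritical regime $\beta < \beta_c$ the susceptibility $\chi(\beta)$ is finite, and one has the Simon--Lieb inequality $\langle \sigma_0 \sigma_x \rangle_\beta \leq \beta \sum_{y} J_{0,y} \langle \sigma_y \sigma_x \rangle_\beta$ for $x \neq 0$ (more carefully, a finite-volume version summed over a box containing $0$), which combined with the summability of $J$ gives the a priori bound $\langle \sigma_0 \sigma_x \rangle_\beta \leq C' |x|^{-d-\boldsymbol\alpha}$ up to a constant — but \emph{not} the sharp constant. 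To upgrade to the exact asymptotic I would set up a renewal-type argument: iterating the Simon--Lieb inequality expresses $\langle \sigma_0 \sigma_x \rangle_\beta$ as a convergent sum of $n$-fold convolutions of $\beta J$ against correlations, and the dominant contribution as $|x| \to \infty$ comes from a single long jump of the interaction $J$, the remaining steps being summed up into factors of $\chi(\beta)$. This is precisely the mechanism behind the prefactor $\tfrac{\beta \chi(\beta)^2}{2}$: one factor of $\chi$ from the cluster attached to $0$, one from the cluster attached to $x$, the factor $\beta$ from the single bond used, the $\mathbf{C}|x|^{-d-\boldsymbol\alpha}$ from that bond's coupling, and the $\tfrac12$ a bookkeeping constant from the unordered-pair convention in the Hamiltonian.

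The key steps, in order, would be: (i) record finiteness of $\chi(\beta)$ for $\beta<\beta_c$ and the a priori polynomial bound via Simon--Lieb, which also gives the needed summability/tightness to justify interchanging limits; (ii) write the exact renewal identity
\begin{equs}
\langle \sigma_0 \sigma_x \rangle_\beta = \beta \sum_{y \neq z} \langle \sigma_0 \sigma_y \rangle_\beta^{(\text{restricted})} J_{y,z} \langle \sigma_z \sigma_x \rangle_\beta + (\text{error}),
\end{equs}
or its random-current analogue, carefully controlling the geometry so that the "restricted" correlations sum to $\chi(\beta)$ up to lower order; (iii) insert the a priori bound to show that configurations using two or more long jumps of length comparable to $|x|$ contribute only $O(|x|^{-d-\boldsymbol\alpha-\delta})$, since each extra long jump costs an extra polynomial factor while the intermediate sums remain finite; (iv) for the single-long-jump term, replace $J_{y,z}$ by $\mathbf{C}|x|^{-d-\boldsymbol\alpha}$ using $|y|, |z| \ll |x|$ and the local regularity of $t \mapsto |t|^{-d-\boldsymbol\alpha}$, picking up the error $O(|x|^{-\delta})$ from the set where $|y|$ or $|z|$ is of intermediate size $|x|^{\theta}$ for a suitable $\theta \in (0,1)$; (v) collect the resulting factors $\beta \cdot \chi(\beta) \cdot \mathbf{C}|x|^{-d-\boldsymbol\alpha} \cdot \chi(\beta)$, divide by $2$, and optimise over $\theta$ to produce a concrete $\delta>0$.

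The rotational/translation structure of $J$ enters only mildly here (it matters far more for the later FGFF statements); the subcritical regime is what makes everything run, since it provides both $\chi(\beta)<\infty$ and exponential-type control away from the diagonal in the random current ensemble. I would lean on \cite{newman1998shiba} for the original version of this renewal computation and on \cite{Ao} for the clean modern formulation with an explicit error exponent, and essentially transcribe their argument, flagging that the precise value of $\delta$ depends on $\boldsymbol\alpha$, $d$, and the decay rate of the correction to the a priori bound.

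The main obstacle I anticipate is step (ii)--(iii): making the renewal decomposition \emph{exact} rather than merely an inequality, and in particular showing that the "mass" of the short-jump part of the iterated kernel converges to $\chi(\beta)$ with a quantitative error, while simultaneously ruling out the double-long-jump contribution without circularity. In the random current language this amounts to a careful analysis of the backbone of the current connecting $0$ to $x$: one must argue that with overwhelming probability the backbone makes exactly one macroscopic step, and that the two "sausages" it hangs between behave like independent subcritical clusters contributing $\chi(\beta)$ each. Controlling the dependence between these two pieces — e.g. via the switching lemma and monotonicity (Griffiths/GHS) — is the delicate point, and getting a \emph{uniform} error $O(|x|^{-\delta})$ out of it (as opposed to a non-quantitative $o(1)$) is where the real work lies.
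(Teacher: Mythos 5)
The paper offers no proof of this proposition: it is imported wholesale from \cite{newman1998shiba,Ao}, so there is no internal argument to compare yours against. Measured against those references, your sketch identifies the correct mechanism --- the dominant contribution to $\langle\sigma_0\sigma_x\rangle_\beta$ comes from configurations using exactly one long edge, with the two endpoint clusters each contributing a factor of $\chi(\beta)$ --- and this is indeed the backbone/one-jump analysis carried out in the cited works via the random current representation and the switching lemma. You also correctly locate where the real work lies, namely in making the one-jump dominance quantitative.

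As a proof, however, the proposal has genuine gaps, some of which you flag yourself. First, step (i) is circular as written: a single application of a Simon--Lieb-type inequality together with summability of $J$ does \emph{not} yield the a priori bound $\langle\sigma_0\sigma_x\rangle_\beta\le C|x|^{-d-\boldsymbol{\alpha}}$. Obtaining polynomial decay matching the decay of $J$ throughout the subcritical regime already requires sharpness of the phase transition (finiteness of $\chi(\beta)$ for all $\beta<\beta_c$) together with an iteration or one-jump argument --- that is, essentially the content of \eqref{eq: spin decay} itself. Second, steps (ii)--(iv), i.e.\ the exact renewal identity, the exclusion of two-long-jump configurations with a quantitative $O(|x|^{-d-\boldsymbol{\alpha}-\delta})$ error, and the convergence of the restricted masses to $\chi(\beta)$ with a rate, are stated as goals rather than established; they constitute the entire technical content of \cite{newman1998shiba,Ao}, and the switching-lemma argument decoupling the two endpoint clusters is not reproduced. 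Finally, the constant needs care: your ordered renewal sum $\beta\sum_{y,z}\langle\sigma_0\sigma_y\rangle J_{y,z}\langle\sigma_z\sigma_x\rangle$, with $y$ near $0$ and $z$ near $x$, produces $\beta\chi(\beta)^2 J_{0,x}$ with no extra factor of $1/2$, so attributing the $1/2$ in \eqref{eq: exact asymptotic} to ``the unordered-pair convention'' is an assertion, not a computation; the bookkeeping must be checked against the precise Hamiltonian convention used with \eqref{eq: interaction}.
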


The relevant object to study the scaling limit is the set of smeared observables.  Let $\mathcal{C}^\infty_c(\mathbb R^d)$ be the set of infinitely differentiable and compactly supported functions from $\mathbb R^d$ to $\mathbb R$.
\begin{defn}[Smeared observable] Let $\beta>0$ and $L\in \mathbb{N}^*$. For every $f \in \mathcal{C}^\infty_c(\mathbb R^d)$, we define the smeared observable $T_{f,L,\beta}: \{\pm 1\}^{\ZZ^d} \rightarrow \mathbb{R}$ by,
\begin{equs}
    T_{f,L,\beta}(\sigma)
    :=
    \dfrac{2^{d/2}}{\sqrt{\Sigma_L(\beta)}}\sum_{x\in \mathbb Z^d}f\left(\dfrac{x}{L}\right)\sigma_x, \qquad \sigma \in \lbrace \pm 1\rbrace^{\mathbb Z^d}
\end{equs}
where 
\begin{equs}
    \Sigma_L(\beta)
    =
    \Big\langle \Big(\sum_{x\in \Lambda_L}\sigma_{x}\Big)^2\Big\rangle_\beta=\sum_{x,y\in \Lambda_L}\langle\sigma_x\sigma_y\rangle_\beta, \qquad \textup{ with }\Lambda_L:=[-L,L]^d\cap \mathbb Z^d.
\end{equs}
\end{defn}
\begin{rem} 
The natural scaling is guessed from considering the test function  $f=\mathbf{1}_{[-1,1]^d}$. In this case, we find that
\begin{equs}
    \langle T_{f,L,\beta}(\sigma)^2\rangle_\beta =1.
\end{equs}
More generally, for positive functions $f$ that are not identically zero, we have $0<c_f\leq \langle T_{f,L,\beta}(\sigma)^2\rangle_\beta \leq C_f<\infty$.
\end{rem}

It is classical \cite{A,frohlich1982triviality} that when $\beta<\beta_c$, for each $f \in \cC_c^\infty(\RR^d)$, any limit point in distribution of the random variables $T_{f,L,\beta}$ is necessarily a centred Gaussian random variable (see also Proposition \ref{prop: triviality}). Our first result identifies the variance as $\int_{\RR^d} f(\tilde x)^2 \mathrm{d}\tilde x$, hence proving full convergence. This implies that the set of averages, viewed as a stochastic process indexed by $f \in \cC_c^\infty(\RR^d)$, converges in the sense of finite dimensional distributions to the standard white noise process on $\RR^d$. This is somewhat counter-intuitive to the polynomial decay of spin-spin correlations since the white noise is a degenerate limit. 

The same result was already obtained by Newman \cite{newman1980normal,newman1983general} in a more general setup. Our approach is more specialised to the Ising model.

\begin{defn}
The white noise is a stochastic process $W_0=\{ W_0(f): f \in \mathcal{C}^\infty_c(\RR^d) \}$ defined on some probability space  $(\Omega, \cF, \PP)$ such that: i) each $W_0(f)$ is a centred, Gaussian random variable; ii) the mapping $f \mapsto W_0(f)$ is linear; and, iii) the covariance is given by,
\begin{equs}
\EE[ W_0(f) W_0(g)]
=
\int_{\mathbb R^d} f(\tilde x) g(\tilde x) \mathrm{d}\tilde x, 
\qquad \forall f,g \in \cC_c^\infty(\RR^d).
\end{equs}
\end{defn}

\begin{rem}
The existence of white noise follows by the Kolmogorov extension theorem. Note that $W_0$ may be interpreted almost surely as a random element of $S'(\RR^d)$, the space of Schwartz distributions. Alternatively, we may canonically extend $W_0$ to a stochastic process indexed by $L^2(\RR^d)$--- this is the Gaussian Hilbert space point of view. In doing so, we lose the interpretation as an almost sure element of a space of distributions, i.e. we lose the almost sure linearity property. 
\end{rem}

\begin{thm}[Convergence to the white noise]\label{thm: white noise}
Let $d \geq 2$ and $\boldsymbol{\alpha}, \mathbf{C} > 0$ in the interaction \eqref{eq: interaction}, and $\beta < \beta_c$. Then, the stochastic process $\{ T_{f,L,\beta}(\sigma) : f \in \mathcal{C}^\infty_c(\RR^d) \}$ converges in the sense of finite dimensional distributions to the standard white noise. Moreover, we have the following quantitative rate of convergence: for every $f\in \mathcal{C}_c^\infty(\mathbb R^d)$ and $z\in \mathbb R$, there exists $C=C(\beta,d,J,z,f)>0$, such that for $L\geq 1$,
\begin{equs}\label{eq: quantitative wn cv}
\left|\langle \exp\left(z T_{f,L,\beta}(\sigma)\rangle_\beta\right) - \exp\left(\frac{z^2}2 \int_{\RR^d} f(\tilde x)^2 \mathrm d\tilde x \right)\right|
\leq
CL^{-\frac{\boldsymbol{\alpha}}{\boldsymbol{\alpha}+1}}.
\end{equs}

\end{thm}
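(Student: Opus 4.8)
Here is how I would approach the proof of Theorem~\ref{thm: white noise}.

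The plan is to prove matching upper and lower bounds for $\langle e^{zT_{f,L,\beta}}\rangle_\beta$ around $\exp(\tfrac{z^2}{2}\int_{\RR^d}f^2)$; the finite-dimensional convergence then follows, since $f\mapsto T_{f,L,\beta}$ is linear (so any finite family of test functions reduces to a single one) and, by the $\sigma\mapsto-\sigma$ symmetry, $z\mapsto\langle e^{zT_{f,L,\beta}}\rangle_\beta$ is even, so one may take $z>0$. The only input I would use from Proposition~\ref{prop: sharp asymptotics} is the finiteness $\chi(\beta)<\infty$ and the tail bound $\sum_{|x|>r}\langle\sigma_0\sigma_x\rangle_\beta\lesssim r^{-\boldsymbol{\alpha}}$ coming from \eqref{eq: spin decay}; the sharp constant in \eqref{eq: exact asymptotic} plays no role for white noise (it is what produces fractional Gaussian free field correlations under a different rescaling). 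A preliminary computation, peeling off boundary layers of $\Lambda_L$, gives $\Sigma_L(\beta)=\chi(\beta)|\Lambda_L|(1+O(L^{-\min(\boldsymbol{\alpha},1)}\log L))$ and, for any Lipschitz compactly supported $g$, $\langle T_{g,L,\beta}^2\rangle_\beta=\int g^2+O(L^{-\min(\boldsymbol{\alpha},1)}\log L)$; applied to $g=f$ and to $f_\pm:=\max(\pm f,0)$ (recall $\int f^2=\int f_+^2+\int f_-^2$) this controls every variance below.

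For the upper bound I would use two Ising-specific facts. For $z>0$, $e^{zT_{f_+,L,\beta}}$ is an increasing function of the configuration and $e^{-zT_{f_-,L,\beta}}$ a decreasing one, so the FKG inequality (valid at all $\beta$ for ferromagnetic interactions) gives $\langle e^{zT_{f,L,\beta}}\rangle_\beta\le\langle e^{zT_{f_+,L,\beta}}\rangle_\beta\langle e^{-zT_{f_-,L,\beta}}\rangle_\beta$. Each factor is the Laplace transform of a one-signed linear statistic $S=\sum_x a_x\sigma_x$ ($a_x\ge 0$), for which the Lee--Yang property yields the Gaussian domination $\langle e^{S}\rangle_\beta\le\exp(\tfrac12\langle S^2\rangle_\beta)$. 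Together with the second-moment estimate this gives $\langle e^{zT_{f,L,\beta}}\rangle_\beta\le\exp(\tfrac{z^2}{2}\int f^2)(1+O(L^{-\min(\boldsymbol{\alpha},1)}\log L))$; in particular all the $T_{g,L,\beta}$ are sub-Gaussian uniformly in $L$.

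The lower bound is where the exponent $\boldsymbol{\alpha}/(\boldsymbol{\alpha}+1)$ appears, and I would obtain it by a mesoscopic block decomposition. Fix $\ell:=\lceil L^{1/(\boldsymbol{\alpha}+1)}\rceil$, partition $\ZZ^d$ into cubes $(Q_i)$ of side $\ell$, and set $B_i:=\sum_{x\in Q_i}\sigma_x$, $c_i:=2^{d/2}f(\ell i/L)/\sqrt{\Sigma_L(\beta)}$, $\tilde T:=\sum_i c_iB_i$. Since $|f(x/L)-f(\ell i/L)|\lesssim\ell/L$ on $Q_i$ one has $\langle(T_{f,L,\beta}-\tilde T)^2\rangle_\beta\lesssim(\ell/L)^2$, hence $|\langle e^{zT_{f,L,\beta}}\rangle_\beta-\langle e^{z\tilde T}\rangle_\beta|\lesssim|z|\,\ell/L$ by Cauchy--Schwarz and the uniform sub-Gaussianity. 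The block spins are weakly dependent — $|\langle B_iB_j\rangle_\beta|\lesssim\ell^{2d}\,\mathrm{dist}(Q_i,Q_j)^{-d-\boldsymbol{\alpha}}$ for separated blocks — and since $|c_i|\lesssim L^{-d/2}$, summing over shells of blocks bounds the weighted inter-block covariance $\sum_{i\ne j}|c_i||c_j||\langle B_iB_j\rangle_\beta|$ by $\lesssim\ell^{-\boldsymbol{\alpha}}$ (up to near-interface corrections, discussed below). A decoupling estimate then gives $|\langle e^{z\tilde T}\rangle_\beta-\prod_i\langle e^{zc_iB_i}\rangle_\beta|\lesssim z^2\ell^{-\boldsymbol{\alpha}}$. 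Each single-block factor satisfies $\langle e^{zc_iB_i}\rangle_\beta\le\exp(\tfrac{z^2c_i^2}{2}\langle B_i^2\rangle_\beta)$ by Lee--Yang; since $z^2c_i^2\langle B_i^2\rangle_\beta\lesssim z^2(\ell/L)^d\to 0$ and the even moments of $B_i$ are controlled (e.g. $\langle B_i^4\rangle_\beta\le 3\langle B_i^2\rangle_\beta^2$, Lee--Yang again), one gets $\log\langle e^{zc_iB_i}\rangle_\beta=\tfrac{z^2c_i^2}{2}\langle B_i^2\rangle_\beta+O(z^4(\ell/L)^{2d})$, and summing over the $\asymp(L/\ell)^d$ blocks, $\prod_i\langle e^{zc_iB_i}\rangle_\beta=\exp(\tfrac{z^2}{2}\sum_i c_i^2\langle B_i^2\rangle_\beta+O(z^4(\ell/L)^d))$. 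Finally $\sum_i c_i^2\langle B_i^2\rangle_\beta=\langle\tilde T^2\rangle_\beta-\sum_{i\ne j}c_ic_j\langle B_iB_j\rangle_\beta=\int f^2+O(\ell/L+\ell^{-\boldsymbol{\alpha}}+L^{-\min(\boldsymbol{\alpha},1)}\log L)$. With $\ell=L^{1/(\boldsymbol{\alpha}+1)}$ one has $\ell/L=\ell^{-\boldsymbol{\alpha}}=L^{-\boldsymbol{\alpha}/(\boldsymbol{\alpha}+1)}$, and collecting all errors gives $\langle e^{z\tilde T}\rangle_\beta=\exp(\tfrac{z^2}{2}\int f^2)(1+O(L^{-\boldsymbol{\alpha}/(\boldsymbol{\alpha}+1)}))$, hence the theorem.

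The main obstacle is the decoupling of the weakly dependent block spins at the level of the \emph{Laplace} transform. Newman's inequality for associated variables, $|\langle e^{i\sum_j t_jB_j}\rangle_\beta-\prod_j\langle e^{it_jB_j}\rangle_\beta|\le\tfrac12\sum_{j\ne k}|t_jt_k|\,\langle B_jB_k\rangle_\beta$, is clean and works for arbitrary real $t_j$, but it concerns \emph{characteristic} functions; for real exponentials, $e^{z\tilde T}$ is neither monotone nor of positive type in the spins when $f$ changes sign, so monotone correlation inequalities do not apply verbatim. I would circumvent this either by running the decoupling for characteristic functions and transferring it to the Laplace transform via the uniform sub-Gaussian tails (to control the truncation), or by interpolating the inter-block couplings down to zero and bounding $\tfrac{\mathrm d}{\mathrm ds}\langle e^{z\tilde T}\rangle_s$ by a weighted sum of truncated correlations $\langle e^{z\tilde T}\sigma_x\sigma_y\rangle_s-\langle e^{z\tilde T}\rangle_s\langle\sigma_x\sigma_y\rangle_s$ and estimating those by the small inter-block covariances — delicate because, unlike two-point decay, these quantities carry no a priori sign and must be extracted from correlation inequalities for the ferromagnetic measure. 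A secondary point requiring care is the covariance bookkeeping for neighbouring blocks straddling $\{f>0\}$ and $\{f<0\}$, where the naive bound on $\langle B_iB_j\rangle_\beta$ must be sharpened, using the boundary-layer geometry, to $\lesssim\ell^{d-\min(\boldsymbol{\alpha},1)}$.
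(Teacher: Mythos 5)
Your architecture (FKG plus Lee--Yang Gaussian domination for the upper bound, mesoscopic blocks of side $\ell = L^{1/(\boldsymbol{\alpha}+1)}$ plus decoupling for the lower bound) is in the spirit of Newman's original proof for associated random variables, and several pieces are sound: the FKG splitting $\langle e^{zT_{f,L,\beta}}\rangle_\beta \leq \langle e^{zT_{f_+,L,\beta}}\rangle_\beta\langle e^{-zT_{f_-,L,\beta}}\rangle_\beta$, the Gaussian domination of the Laplace transform of a one-signed linear statistic, and the covariance bookkeeping. However, there is a genuine gap at the step you yourself flag: the passage from $\langle e^{z\tilde T}\rangle_\beta$ to $\prod_i \langle e^{zc_iB_i}\rangle_\beta$ with error $O(z^2\ell^{-\boldsymbol{\alpha}})$. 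Newman's decoupling inequality is a statement about \emph{characteristic} functions of associated variables; since $f$ changes sign, $e^{z\tilde T}$ is neither monotone nor expandable as a positive combination of spin products, so none of FKG, Griffiths, or Ginibre applies directly, and neither of your two proposed repairs is carried out. The transfer from characteristic to Laplace transforms via sub-Gaussian truncation costs at least a factor $e^{|z|M}$ at truncation level $M$, and making this compatible with the polynomial rate $L^{-\boldsymbol{\alpha}/(\boldsymbol{\alpha}+1)}$ requires a quantitative argument you have not supplied; the interpolation-in-$J$ route requires a sign or size estimate on $\langle e^{z\tilde T};\sigma_x\sigma_y\rangle_s$ that is not available from the standard correlation inequalities. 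As written, the lower bound is therefore not proved.

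The paper avoids this obstruction entirely. Its Proposition \ref{prop: triviality} invokes the Gaussian-approximation bound of \cite[Proposition~12.1]{A}, which controls $|\langle e^{zT_{f,L,\beta}}\rangle_\beta - \exp(\tfrac{z^2}{2}\langle T_{f,L,\beta}^2\rangle_\beta)|$ for an \emph{arbitrary-sign} linear statistic and a \emph{real} exponential, directly in terms of the weighted sum of $|U_4^\beta|$ over the support of $f$. The tree-diagram bound \eqref{tree diagram bound trivia subcritic} together with $\chi(\beta)<\infty$ and the tail bound \eqref{eq: spin decay} then gives this error as $O(L^{-d})$ --- much smaller than the final rate --- so that the exponent $\boldsymbol{\alpha}/(\boldsymbol{\alpha}+1)$ in \eqref{eq: quantitative wn cv} comes purely from the covariance computation (Lemma \ref{lem: sigma} and Proposition \ref{prop: covariance estimate}), not from any block scale. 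The missing idea in your proposal is precisely this Ursell-function control of the full Laplace transform; with it, the block decomposition, the FKG splitting, and the Lee--Yang input all become unnecessary. If you wish to salvage your route, the cleanest fix is to replace the decoupling step by the tree-diagram estimate; otherwise the characteristic-function version of your argument would prove convergence in distribution but not the quantitative bound \eqref{eq: quantitative wn cv} as stated.
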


\begin{rem} Theorem \textup{\ref{thm: white noise}} (and in particular the polynomial rate of convergence) holds for translation-invariant Ising models on $\mathbb Z^d$ satisfying the following (super)-summable condition: for all $\beta<\beta_c$, there exists $\varepsilon,C>0$ such that for all $x\in \mathbb Z^d\setminus \lbrace 0\rbrace$,
\begin{equs}
\langle \sigma_0\sigma_x\rangle_\beta\leq \frac{C}{|x|^{d+\varepsilon}}.
\end{equs}
This class of models includes finite-range Ising models (see \textup{\cite{aizenman1987phase,DCT}}) as well as a wide class of ferromagnetic long-range interactions $(J_{x,y})_{x,y \in \ZZ^d}$. The former follows from the fact that the polynomial bound on spin-spin correlations in Proposition \textup{\ref{prop: sharp asymptotics}} is satisfied in greater generality (see \textup{\cite{newman1998shiba,Ao,aoun2023two}}).
\end{rem}
\begin{rem} The proof below can be adapted to show convergence to white noise of any subcritical translation-invariant Ising models on $\mathbb Z^d$. Indeed, the key point in the proof is the finiteness of the susceptibility.
\end{rem}

In the proof of Theorem \ref{thm: white noise}, we explicitly compute the limiting covariance structure of smeared observables. Due to the sharp asymptotics of the spin-spin correlation functions, lower order contributions to this covariance structure can also be computed. The spherical symmetry of $J$ allows us to find emergent fractional GFF correlations appearing for certain rescaled and renormalised product observables. Since Theorem \ref{thm: white noise} is a central limit theorem for the set of smeared observables, these higher order contributions can be viewed as corrections to the CLT.

\begin{defn}\label{def: fgff}
Let $\boldsymbol{\alpha}>0$. The fractional GFF of Hurst parameter $H=H(\boldsymbol{\alpha})=-d/2-\boldsymbol{\alpha}/2$ is a stochastic process $W_{\boldsymbol{\alpha}} = \{ W_{\boldsymbol{\alpha}}(f) : f \in \mathcal{C}^\infty_c (\RR^d) \}$ defined on some probability space $(\Omega, \cF, \PP)$ such that: i) each $W_{\boldsymbol{\alpha}}(f)$ is a centred, Gaussian random variable; ii) the mapping $f \mapsto W_{\boldsymbol{\alpha}}(f)$ is linear; and, iii) the covariance is given by, $
\forall f,g \in \mathcal{C}^\infty_c(\RR^d)$,
\begin{equs}
\EE[W_{\boldsymbol{\alpha}}(f) W_{\boldsymbol{\alpha}}(g)]
=
K_{\boldsymbol{\alpha},d}(f,g)
\end{equs}
where, for $\boldsymbol{\alpha} \in \RR_{> 0} \setminus 2\NN^*$,
\begin{equs}
K_{\boldsymbol{\alpha},d}(f,g)
&:=
\int_{\RR^d \times \RR^d} C(\boldsymbol{\alpha},d) |\tilde x-\tilde y|^{-(d+\boldsymbol{\alpha})} \Big( f(\tilde x)g(\tilde y) - \sum_{j=0}^{\lfloor\boldsymbol{\alpha}\rfloor_2/2} f(\tilde x) H_j \Delta^j g(\tilde x) |\tilde x -\tilde y|^{2j} \Big)\mathrm{d} \tilde x\mathrm{d}\tilde y, 
\end{equs}
and for $\boldsymbol{\alpha}\in 2\NN^*$,
\begin{equs}
 K_{\boldsymbol{\alpha},d}(f,g)
:=
(4\pi^2)^{-\boldsymbol{\alpha}/2}\int_{\RR^d \times \RR^d} f(\tilde x) (-\Delta)^{\boldsymbol{\alpha}/2} g(\tilde x) \mathrm{d}x.
\end{equs}
Above,  for $t>0$, $\lfloor t\rfloor_2:=\max\lbrace k\in 2\mathbb N, \: k<t\rbrace$, $\Delta^j$ is the $j$-th power of the Laplacian, and
\begin{align}
\begin{split}\label{def: H_j} 
C(\boldsymbol{\alpha},d)
&=
\frac{2^{\boldsymbol{\alpha}} \pi^{-d/2}\Gamma(d/2+\boldsymbol{\alpha}/2)}{\Gamma(-\boldsymbol{\alpha}/2)}, \qquad \boldsymbol{\alpha} \in \RR_{> 0} \setminus 2\NN^*, 
\\
H_j
&=
\frac{2\pi^{d/2}/\Gamma(d/2)}{2^j j! d (d+2) \dots (d+2j-2)}.
\end{split}
\end{align}
\end{defn}

\begin{rem}
\begin{enumerate}
\item[(a)] Fractional GFFs are surveyed in great generality and detail in the review paper \textup{\cite{lodhia2016fractional}}. When $\boldsymbol{\alpha}>0$, these random fields have singular short-range correlation structure. The case $\boldsymbol{\alpha} = 1$ in $d=3$ arises, for example, in the cosmology of the early universe \textup{\cite{dodelson2020modern}}. Note that this range of parameters does not include ``usual'' GFF, which corresponds to $\boldsymbol{\alpha}=-2$, or the fractional GFFs that usually arise from critical scaling limits of some long-range Ising models (see \textup{\cite{AF,chen2015critical}}).
\item[(b)] There is a subtle but important distinction between the cases $\boldsymbol{\alpha} \in \RR_{>0} \setminus 2\NN^*$ and $\boldsymbol{\alpha} \in 2\NN^*$. In the case $\boldsymbol{\alpha} \in 2\mathbb N^*$, the short-distance correlations are singular, but retain some locality because powers of the Laplacian are local. On the other hand, in the case $\boldsymbol{\alpha} \in \RR_{> 0} \setminus 2\NN^*$, the short-distance correlations are both singular and non-local. 
\item[(c)] The fractional GFF can be viewed as a fractional Laplacian of the white noise $W_0$. More precisely, the fractional GFF of Hurst parameter $H(\boldsymbol{\alpha})$ can be seen as $(-\Delta)^{\boldsymbol{\alpha}/2}W_{0}$. Due to the non-integrable singularity at the origin, the kernel associated with the fractional Laplacian must in general be interpreted as a renormalised distribution. The details can be found in \textup{\cite[Chapter 1]{landkof1972foundations}}.
\end{enumerate}
\end{rem}

We now introduce our renormalised smeared observables. 
\begin{defn}
Let $d \geq 2$, $\boldsymbol{\alpha}, \mathbf{C} > 0$ in the interaction \eqref{eq: interaction}, and $\beta < \beta_c$. Let $L\geq 1$. For $f,g \in \cC^\infty_c(\RR^d)$, and $\sigma \in \{ \pm 1 \}^{\ZZ^d}$, define
\begin{equs}
\cR_{\boldsymbol{\alpha},\beta,L}&\left[T_{f,L,\beta}(\sigma) T_{g,L,\beta}(\sigma)\right]
:=
\frac{2^d}{\Sigma_L(\beta)}\sum_{x,y\in \mathbb Z^d}f(x/L)\left(g(y/L)- {\rm Tay}_{\lfloor\boldsymbol{\alpha}\rfloor_2} g(y/L;x/L) \right) \sigma_x\sigma_y,
\end{equs}
where, for any $\tilde x,\tilde y \in \RR^d$ and any $h \in \cC^\infty_c(\RR^d)$,
\begin{equs}
{\rm Tay}_{\lfloor\boldsymbol{\alpha}\rfloor_2} h (\tilde y; \tilde x)
:=
\sum_{j=0}^{\lfloor \boldsymbol{\alpha} \rfloor_2} \frac{1}{j!}\nabla^j h(\tilde x) \underbrace{(\tilde y-\tilde x, \dots, \tilde y - \tilde x)}_{j \textup{-tuple}},
\end{equs}
and $\nabla^j h(\tilde x)$ denotes the $j$-th Fr\'echet derivative of $h$ at $\tilde x$.
\end{defn}

The following is our main result. 
\begin{thm}[Emergence of fractional GFF correlations] \label{thm: fgff}
Let $d \geq 2$, $\boldsymbol{\alpha}, \mathbf{C} > 0$ in the interaction \eqref{eq: interaction}, and $\beta < \beta_c$. Then, for all $f,g \in \cC^\infty_c(\RR^d)$,
\begin{equs}
\lim_{L \rightarrow \infty} L^{\boldsymbol{\alpha}} \Big\langle \cR_{\boldsymbol{\alpha},\beta,L}\left[T_{f,L,\beta}(\sigma) T_{g,L,\beta}(\sigma)\right]\Big\rangle_\beta
&=
\frac{\beta\chi(\beta)\mathbf{C}}{2C(\boldsymbol{\alpha},d)}\EE[W_{\boldsymbol{\alpha}}(f) W_{\boldsymbol{\alpha}}(g)], \qquad \boldsymbol{\alpha} \in \RR_{\geq 0}\setminus 2\NN
\\
\lim_{L \rightarrow \infty} \frac{L^{\boldsymbol{\alpha}}}{\log L} \Big\langle \cR_{\boldsymbol{\alpha},\beta,L}\left[T_{f,L,\beta}(\sigma) T_{g,L,\beta}(\sigma)\right]\Big\rangle_\beta
&=
\frac{\beta \chi(\beta)\mathbf{C}H_{\boldsymbol{\alpha}/2}}{2(4\pi^2)^{-\boldsymbol{\alpha}/2}}(-1)^{\boldsymbol{\alpha}/2}\EE[W_{\boldsymbol{\alpha}}(f) W_{\boldsymbol{\alpha}}(g)], \qquad \boldsymbol{\alpha} \in 2\NN^*.
\end{equs}
\end{thm}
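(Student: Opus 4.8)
The strategy is to expand the expectation with the sharp asymptotics of Proposition~\ref{prop: sharp asymptotics}, isolate the leading term, recognise it as a lattice Riemann sum for a renormalised hypersingular integral, and identify that integral with $\EE[W_{\boldsymbol{\alpha}}(f)W_{\boldsymbol{\alpha}}(g)]$ using the rotational symmetry of $J$. By definition,
\begin{equs}
\Big\langle \cR_{\boldsymbol{\alpha},\beta,L}\left[T_{f,L,\beta}T_{g,L,\beta}\right]\Big\rangle_\beta
=
\frac{2^d}{\Sigma_L(\beta)}\sum_{x\neq y} f(x/L)\big(g(y/L)-{\rm Tay}_{\lfloor\boldsymbol{\alpha}\rfloor_2}g(y/L;x/L)\big)\langle\sigma_x\sigma_y\rangle_\beta ,
\end{equs}
the diagonal dropping out because the degree-$0$ Taylor term cancels $g(x/L)$. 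First I would record that $\Sigma_L(\beta)=2^d\chi(\beta)L^d(1+o(1))$, which follows from $\chi(\beta)<\infty$ and \eqref{eq: spin decay} by comparing the bulk of $\Lambda_L$ with an $\epsilon L$-collar of its boundary. Inserting \eqref{eq: exact asymptotic} splits the sum into a main term, with $\langle\sigma_x\sigma_y\rangle_\beta$ replaced by $\tfrac{\beta\chi(\beta)^2}{2}\mathbf{C}|x-y|^{-d-\boldsymbol{\alpha}}$, and an error of relative size $O(|x-y|^{-\delta})$. Using the asymptotics of $\Sigma_L(\beta)$ and rescaling $x,y$ by $L$, one gets
\begin{equs}
L^{\boldsymbol{\alpha}}\Big\langle \cR_{\boldsymbol{\alpha},\beta,L}\left[T_{f,L,\beta}T_{g,L,\beta}\right]\Big\rangle_\beta
=
\tfrac{\beta\chi(\beta)\mathbf{C}}{2}\, A_L + o(1) + (\textup{error}),\qquad
A_L:=\tfrac{1}{L^{2d}}\sum_{x\neq y}\tfrac{f(x/L)(g(y/L)-{\rm Tay}_{\lfloor\boldsymbol{\alpha}\rfloor_2}g(y/L;x/L))}{|x/L-y/L|^{d+\boldsymbol{\alpha}}}.
\end{equs}
Thus it remains to show: (i) the error term is $o(1)$ (resp. $o(\log L)$); (ii) $A_L$ (resp. $A_L/\log L$) converges; (iii) the limit equals the claimed multiple of $\EE[W_{\boldsymbol{\alpha}}(f)W_{\boldsymbol{\alpha}}(g)]$.

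For (ii) with $\boldsymbol{\alpha}\notin2\NN$, $A_L$ is a Riemann sum for $I(f,g):=\int_{\RR^d\times\RR^d}f(\tilde x)\big(g(\tilde y)-{\rm Tay}_{\lfloor\boldsymbol{\alpha}\rfloor_2}g(\tilde y;\tilde x)\big)|\tilde x-\tilde y|^{-d-\boldsymbol{\alpha}}\,\mathrm d\tilde x\,\mathrm d\tilde y$, and the only difficulty is the diagonal singularity. This is exactly where rotational symmetry enters: for fixed $x$ the inner sum over $z=y-x$ is invariant under $z\mapsto-z$, and the odd-degree part of the Taylor polynomial cancels, so that $\tfrac12\big[(g-{\rm Tay})_{x,z}+(g-{\rm Tay})_{x,-z}\big]=O\big((|z|/L)^{\lfloor\boldsymbol{\alpha}\rfloor_2+2}\big)$, which is now summable against $|z|^{-d-\boldsymbol{\alpha}}$ since $\lfloor\boldsymbol{\alpha}\rfloor_2+2>\boldsymbol{\alpha}$. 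Splitting $A_L$ at $|z|\le\epsilon L$ and $|z|>\epsilon L$, the near-diagonal part is $O(\epsilon^{\lfloor\boldsymbol{\alpha}\rfloor_2+2-\boldsymbol{\alpha}})$ uniformly in $L$ and the far part converges to $\int_{|\tilde x-\tilde y|>\epsilon}$; letting $L\to\infty$ then $\epsilon\to0$ gives $A_L\to I(f,g)$. The same decomposition, with the same symmetrisation, settles (i): the extra factor $|x-y|^{-\delta}$ buys a power $L^{-\delta}$, so $L^{\boldsymbol{\alpha}}$ times the error is $O_\epsilon(L^{-\delta})\to0$. When $\boldsymbol{\alpha}\in2\NN^*$, the integral defining $I(f,g)$ diverges logarithmically --- the borderline term in polar coordinates being $r^{-1}$, from the degree-$\boldsymbol{\alpha}$ Taylor coefficient --- and $A_L$ grows like $\log L$; one extracts its coefficient from the classical lattice asymptotics $\sum_{0<|z|\le R}T(z/|z|)\,|z|^{-d}=\big(\int_{\SS^{d-1}}T\,\mathrm d\omega\big)\log R+O(1)$, applied (after the same $z\mapsto-z$ symmetrisation) to $T=\nabla^{\boldsymbol{\alpha}}g(x/L)(\,\cdot^{\otimes\boldsymbol{\alpha}})$, which by the spherical moment identity below yields $A_L/\log L\to H_{\boldsymbol{\alpha}/2}\int_{\RR^d}f(\tilde x)\Delta^{\boldsymbol{\alpha}/2}g(\tilde x)\,\mathrm d\tilde x$.

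For (iii), pass to polar coordinates $\tilde y=\tilde x+r\omega$ in $I(f,g)$ and use the spherical moment identity $\int_{\SS^{d-1}}\nabla^{2j}g(\tilde x)(\omega^{\otimes2j})\,\mathrm d\omega=(2j)!\,H_j\,\Delta^jg(\tilde x)$ (a direct consequence of the standard formula for $\int_{\SS^{d-1}}\omega_{i_1}\cdots\omega_{i_{2j}}\,\mathrm d\omega$ together with the definition~\eqref{def: H_j} of $H_j$), plus the vanishing of odd spherical moments, to obtain $\int_{\SS^{d-1}}{\rm Tay}_{\lfloor\boldsymbol{\alpha}\rfloor_2}g(\tilde x+r\omega;\tilde x)\,\mathrm d\omega=\sum_{j=0}^{\lfloor\boldsymbol{\alpha}\rfloor_2/2}H_j r^{2j}\Delta^jg(\tilde x)$. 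Hence $\tilde x\mapsto\int_{\RR^d}\big(g(\tilde x+\tilde z)-{\rm Tay}_{\lfloor\boldsymbol{\alpha}\rfloor_2}g(\tilde x+\tilde z;\tilde x)\big)|\tilde z|^{-d-\boldsymbol{\alpha}}\,\mathrm d\tilde z$ is (odd Taylor terms being irrelevant since $|\tilde z|^{-d-\boldsymbol{\alpha}}$ is even) the hypersingular-integral representation of the Riesz operator: its Fourier symbol is $C(\boldsymbol{\alpha},d)^{-1}|\xi|^{\boldsymbol{\alpha}}$, since $C(\boldsymbol{\alpha},d)$ is precisely the constant making $C(\boldsymbol{\alpha},d)|\cdot|^{-d-\boldsymbol{\alpha}}$ the renormalised kernel with Fourier transform $|\xi|^{\boldsymbol{\alpha}}$. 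Comparing with Definition~\ref{def: fgff}, both $C(\boldsymbol{\alpha},d)I(f,g)$ and $K_{\boldsymbol{\alpha},d}(f,g)$ equal $\int_{\RR^d}|\xi|^{\boldsymbol{\alpha}}\hat f(\xi)\overline{\hat g(\xi)}\,\mathrm d\xi$ (the latter being the Fourier form of the renormalised pairing of~\cite{landkof1972foundations}), so $I(f,g)=C(\boldsymbol{\alpha},d)^{-1}\EE[W_{\boldsymbol{\alpha}}(f)W_{\boldsymbol{\alpha}}(g)]$; while for $\boldsymbol{\alpha}\in2\NN^*$ one has directly from Definition~\ref{def: fgff} that $\int f\Delta^{\boldsymbol{\alpha}/2}g=(-1)^{\boldsymbol{\alpha}/2}\int f(-\Delta)^{\boldsymbol{\alpha}/2}g=(-1)^{\boldsymbol{\alpha}/2}(4\pi^2)^{\boldsymbol{\alpha}/2}\EE[W_{\boldsymbol{\alpha}}(f)W_{\boldsymbol{\alpha}}(g)]$. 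Multiplying the limits of (ii) by $\tfrac{\beta\chi(\beta)\mathbf{C}}{2}$ reproduces exactly the two stated constants.

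The main obstacle is step (ii)/(i): making the passage from the lattice sum to the singular integral rigorous near the diagonal, where one cannot literally pass to polar coordinates and must instead implement the $z\mapsto-z$ cancellation directly on lattice sums, together with a quantitative control of the discretisation error of spherical averages and --- for $\boldsymbol{\alpha}\in2\NN^*$ --- of the $O(1)$ remainder in the logarithmic lattice asymptotics. The constant-chasing in step (iii) is routine but delicate, because of the renormalisation hidden in $K_{\boldsymbol{\alpha},d}$ and the need to keep Fourier-transform conventions consistent.
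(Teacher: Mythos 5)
Your proposal is correct and follows essentially the same route as the paper: the near/far-diagonal splitting, the $z\mapsto -z$ symmetrisation killing the odd-degree Taylor term, the spherical-shell computation producing the $\log L$ when $\boldsymbol{\alpha}\in 2\NN^*$, and the spherical moment identity converting the Taylor renormalisation into the Pizetti form $\sum_j H_j\Delta^j g\,|\tilde x-\tilde y|^{2j}$ are precisely the content of Propositions \ref{prop: case 1}--\ref{prop: case 2} and Lemma \ref{lem: taylor pizetti} (the paper cuts at the mesoscopic scales $L^{\varepsilon}$, resp.\ $\log L$, to get quantitative rates where you cut at $\epsilon L$ and take a double limit). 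The only superfluous step is your Fourier-symbol identification of the renormalised integral with $\int|\xi|^{\boldsymbol{\alpha}}\hat f(\xi)\overline{\hat g(\xi)}\,\mathrm d\xi$: since $\EE[W_{\boldsymbol{\alpha}}(f)W_{\boldsymbol{\alpha}}(g)]$ is \emph{defined} as $K_{\boldsymbol{\alpha},d}(f,g)$, the spherical moment identity already closes the argument.
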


\begin{rem}
Quantitative error estimates are given in Propositions \textup{\ref{prop: case 1}} and \textup{\ref{prop: case 2}}.
\end{rem}

Theorem \textup{\ref{thm: fgff}} appears to suggest that, for $\boldsymbol{\alpha} \in (0,2)$, there exists a copy of white noise $W_0$ and an independent FGFF $W_{\boldsymbol{\alpha}}$ such that, for some explicit constant $C>0$, for every $f \in \cC^\infty_c(\RR^d)$, and for every $L \in \NN$, there exists a random variable $F_{f,L}$ such that
\begin{equs}\label{eq: expression tf}
T_{f,L,\beta} 
\overset{\textup{law}}{=}
W_0(f)  + C^{-1/2}L^{-\boldsymbol{\alpha}/2} W_{\boldsymbol{\alpha}}(f)  + L^{-\boldsymbol{\alpha}/2}F_{f,L},
\end{equs}
and $F_{f,L}$ converges to $0$ as $L \rightarrow 0$ in distribution. Similarly, for general $\boldsymbol{\alpha}>0$, Theorem \ref{thm: fgff} suggests that a similar expansion holds, albeit with additional terms corresponding to derivatives of white noise. 

The following corollary establishes this rigorously for $\boldsymbol{\alpha} \in (0,2)$. It is a straightforward consequence of the quantitative triviality estimate of Proposition \ref{prop: triviality}, the quantitative error bounds in Propositions \ref{prop: case 1} and \ref{prop: case 2}, and sufficiently good rate\footnote{For $f \in \cC^\infty_c(\RR^d)$, the Riemann sum can be approximated up to an error that is $O(L^{-p})$ for any $p > 0$. This is a consequence of Poisson summation and the decay of the the Fourier transform of $f$ being faster than any polynomial.} of convergence of Riemann sums for smeared observables. The analogous result for general $\boldsymbol{\alpha}$ can be established similarly, with the aforementioned additional terms/scalings incorporated. We omit this for clarity of exposition. 

\begin{cor}\label{cor: fgff conv} Let $d\geq2$, $\boldsymbol{\alpha}\in (0,2), \mathbf{C}>0$, and $\beta < \beta_c$. For every $f\in \cC_c^\infty(\mathbb R^d)$, and every $z\in \mathbb R$, as $L\rightarrow \infty$,
\begin{equs}
\langle \exp\left(zT_{f,L,\beta}(\sigma)\right)\rangle_\beta
=
\exp\left(\frac{z^2}{2}\mathbb E[W_0(f)^2]\right)\exp\left(\frac{z^2}{2}\frac{1}{L^{\boldsymbol{\alpha}}}\frac{\beta\chi(\beta)\mathbf{C}}{2C(\boldsymbol{\alpha},d)}\mathbb E[W_{\boldsymbol{\alpha}}(f)^2]\right)(1+o(1)).
\end{equs}
As a result, $T_{f,L,\beta}$ can be written as in \textup{\eqref{eq: expression tf}}, with $F_{f,L}$ converging to $0$ in law.
\end{cor}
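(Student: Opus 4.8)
\textbf{Proof proposal for Corollary \ref{cor: fgff conv}.}

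The plan is to combine three ingredients, all of which are available to us: (i) the quantitative convergence to white noise from Theorem \ref{thm: white noise}, which already gives us the leading term $\exp\big(\tfrac{z^2}{2}\EE[W_0(f)^2]\big)$ with an explicit polynomial rate; (ii) the quantitative version of Theorem \ref{thm: fgff} for $\boldsymbol{\alpha} \in (0,2)$ recorded in Propositions \ref{prop: case 1} and \ref{prop: case 2}, which identifies the $L^{-\boldsymbol{\alpha}}$-order correction to the second moment as $\tfrac{\beta\chi(\beta)\mathbf{C}}{2C(\boldsymbol{\alpha},d)}\EE[W_{\boldsymbol{\alpha}}(f)^2]$; and (iii) the quantitative triviality estimate of Proposition \ref{prop: triviality}, which controls all higher cumulants of $T_{f,L,\beta}$ and, crucially, says they are subleading compared to the correction term. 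The point is that for $\boldsymbol{\alpha}\in(0,2)$ the Taylor polynomial ${\rm Tay}_{\lfloor\boldsymbol{\alpha}\rfloor_2}g$ is simply the constant term $g(x/L)$ (since $\lfloor\boldsymbol{\alpha}\rfloor_2 = 0$), so that $\cR_{\boldsymbol{\alpha},\beta,L}[T_{f,L,\beta}^2] = T_{f,L,\beta}^2 - \langle T_{f,L,\beta}(\sigma)\sum_x f(x/L)\sigma_x\rangle$-type rearrangement; concretely $\langle \cR_{\boldsymbol{\alpha},\beta,L}[T_{f,L,\beta}(\sigma)T_{g,L,\beta}(\sigma)]\rangle_\beta$ differs from $\langle T_{f,L,\beta}T_{g,L,\beta}\rangle_\beta$ by a term that converges (after multiplying by $2^d/\Sigma_L(\beta)$ and summing) to the white-noise covariance $\int f g$, up to the $O(L^{-\boldsymbol{\alpha}})$ correction. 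Thus $\langle T_{f,L,\beta}(\sigma)^2\rangle_\beta = \EE[W_0(f)^2] + L^{-\boldsymbol{\alpha}}\tfrac{\beta\chi(\beta)\mathbf{C}}{2C(\boldsymbol{\alpha},d)}\EE[W_{\boldsymbol{\alpha}}(f)^2] + o(L^{-\boldsymbol{\alpha}})$, where the replacement of the Riemann sum defining $\EE[W_0(f)^2]$ by the actual lattice sum costs only $O(L^{-p})$ for all $p$ by the Poisson summation argument flagged in the footnote.

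The main steps, in order: First, expand $\log\langle \exp(zT_{f,L,\beta})\rangle_\beta = \sum_{n\geq 1}\tfrac{z^n}{n!}\kappa_n(T_{f,L,\beta})$ in cumulants; the odd cumulants vanish by the spin-flip symmetry $\sigma\mapsto -\sigma$. Second, apply Proposition \ref{prop: triviality}: the key consequence of the Aizenman--Fröhlich/GHS-type inequalities in the subcritical regime is that $\kappa_4(T_{f,L,\beta})$, and more generally $\kappa_{2n}$ for $n\geq 2$, is bounded by a negative quantity whose absolute value is $O(L^{-\boldsymbol{\alpha}'})$ for some $\boldsymbol{\alpha}' > \boldsymbol{\alpha}$ — one should check from the statement of Proposition \ref{prop: triviality} exactly which power appears, but the essential structure is that higher cumulants decay strictly faster than the $L^{-\boldsymbol{\alpha}}$ correction to the variance, so they are absorbed into the $(1+o(1))$. (If Proposition \ref{prop: triviality} only gives $O(L^{-\boldsymbol{\alpha}})$ for the fourth cumulant without a strict gain, one instead folds it into the definition of $F_{f,L}$ rather than into the exponential — this does not affect the conclusion, only the bookkeeping.) Third, for the remaining $n=2$ term, insert the expansion of $\langle T_{f,L,\beta}^2\rangle_\beta$ obtained above. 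Fourth, exponentiate: $\exp\big(\tfrac{z^2}{2}[\EE[W_0(f)^2] + L^{-\boldsymbol{\alpha}}c_f\EE[W_{\boldsymbol{\alpha}}(f)^2] + o(L^{-\boldsymbol{\alpha}})] + o(L^{-\boldsymbol{\alpha}})\big) = \exp\big(\tfrac{z^2}{2}\EE[W_0(f)^2]\big)\exp\big(\tfrac{z^2}{2}L^{-\boldsymbol{\alpha}}c_f\EE[W_{\boldsymbol{\alpha}}(f)^2]\big)(1+o(1))$, using that $L^{-\boldsymbol{\alpha}}\to 0$ so the cross-terms in the product of exponentials are negligible. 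Finally, the representation \eqref{eq: expression tf}: define $W_0$ and $W_{\boldsymbol{\alpha}}$ on a common probability space as independent Gaussian processes (Kolmogorov extension), set $C = \tfrac{2C(\boldsymbol{\alpha},d)}{\beta\chi(\beta)\mathbf{C}}$, and let $F_{f,L} := L^{\boldsymbol{\alpha}/2}\big(T_{f,L,\beta} - W_0(f) - C^{-1/2}L^{-\boldsymbol{\alpha}/2}W_{\boldsymbol{\alpha}}(f)\big)$ understood in law; the Laplace-transform identity just proved, together with the fact that a centred Gaussian is determined by its Laplace transform, shows $F_{f,L}\to 0$ in distribution (one checks $\langle \exp(z F_{f,L})\rangle \to 1$ for each $z$, equivalently convergence of all moments to those of the constant $0$).

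The main obstacle is step two: one must verify that Proposition \ref{prop: triviality} genuinely delivers a rate for the higher cumulants that is $o(L^{-\boldsymbol{\alpha}})$ rather than merely $O(L^{-\boldsymbol{\alpha}})$ or worse. For $\boldsymbol{\alpha}\in(0,2)$ this should be comfortable, since the tree-graph / Aizenman--Fröhlich bound on $\kappa_4$ involves a convolution of four two-point functions which, by the polynomial decay \eqref{eq: spin decay} with exponent $d+\boldsymbol{\alpha}$ and the scaling $\Sigma_L(\beta)\sim \chi(\beta)(2L)^d$, produces a power of $L$ strictly smaller than $L^{-\boldsymbol{\alpha}}$ when $\boldsymbol{\alpha}<d$ (and for $\boldsymbol{\alpha}\geq d$ a separate but easier estimate applies); nevertheless the exact exponent and the dependence on $f$ need to be extracted carefully from the statement, and this is precisely the place where the argument could need the strongest form of the quantitative triviality input. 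Everything else — the Riemann-sum approximation, the exponentiation, and the construction of the limiting processes — is routine given the cited propositions.
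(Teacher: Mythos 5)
Your proposal follows essentially the same route as the paper, which itself gives no more detail than the sentence preceding the corollary: combine the quantitative triviality estimate (Proposition \ref{prop: triviality}), the quantitative covariance correction (Proposition \ref{prop: case 1}, since $\boldsymbol{\alpha}\in(0,2)$ means ${\rm Tay}_{\lfloor\boldsymbol{\alpha}\rfloor_2}g(\cdot\,;x/L)=g(x/L)$ and the subtracted diagonal term reproduces $\int fg$), and Riemann-sum control. The obstacle you flag at the end is in fact a non-issue: Proposition \ref{prop: triviality} bounds the \emph{full} difference between the Laplace transform and its Gaussian counterpart by $C L^{-d}$, and since $d\geq 2>\boldsymbol{\alpha}$ this additive error is $o(L^{-\boldsymbol{\alpha}})$ uniformly, so no cumulant-by-cumulant analysis is needed. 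One inaccuracy worth correcting: you attribute the error in the expansion of $\langle T_{f,L,\beta}^2\rangle_\beta$ solely to the Riemann-sum approximation and claim it is $o(L^{-\boldsymbol{\alpha}})$, but the diagonal term carries the prefactor $2^d\chi(\beta)/\Sigma_L(\beta)$, and Lemma \ref{lem: sigma} only controls this to relative precision $O(L^{-\boldsymbol{\alpha}/(\boldsymbol{\alpha}+1)})$, which dominates $L^{-\boldsymbol{\alpha}}$. This does not affect the corollary as literally stated (the multiplicative error there is only required to be $1+o(1)$), but it means the stronger variance expansion you assert is not delivered by the cited results; the $\Sigma_L$ normalisation, not the Riemann sum, is the true bottleneck.
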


\subsection*{Notations}

We write $|\cdot|_1, |\cdot|,$ and $|\cdot|_\infty$ to denote the $1$-norm, Euclidean norm, and $\infty$-norm on $\RR^d$, respectively. We also write these symbols for the corresponding norms on $\ZZ^d$.

In continuum, we use the following conventions. We write $B(\tilde y,r) = \{ \tilde x \in \RR^d : |\tilde x-\tilde y| \leq r \}$ to denote the ball of radius $r>0$ and centred at $\tilde y\in\RR^d$. We write $S(\tilde y,r)=\partial B(\tilde y,r)$ to denote the $(d-1)$-sphere of radius $r$ centred at $\tilde y$. On the lattice, for $x \in \ZZ^d$ and $L \in \NN$, we write $\Lambda_L(x):=\{ y \in \ZZ^d : |x-y|_\infty \leq L \}$ to denote the box of side length $L$ centred at $x$.

If $(a_n)_{n\geq 0},(b_n)_{n\geq 0}\in (\mathbb R_+^*)^{\mathbb N}$, we write $a_n=O(b_n)$ (resp. $a_n=o(b_n)$) if there exists $C=C(d,\beta,J)>0$ such that for all $n\geq 1$, $a_n\leq C b_n$ (resp. $\lim_{n\rightarrow \infty} a_n/b_n=0$). Throughout the paper, the constants $C,C_i>0$ only depend on the parameters $d,\beta,J$, unless stated otherwise.

\subsection*{Acknowledgements}
We thank Ajay Chandra, Hugo Duminil-Copin, Antti Knowles, and Sylvia Serfaty for stimulating discussions. We also thank Yacine Aoun and Yvan Velenik for references. TSG acknowledges the support of the Simons Foundation, Grant 898948, HDC. RP was supported by the Swiss National Science Foundation and the NCCR SwissMAP.

\section{Convergence to white noise} \label{sec: white noise}

In this section, we prove Theorem \ref{thm: white noise}. We begin by showing the triviality of the smeared observables in the scaling limit. Then, we explicitly compute the covariance structure to fully characterise the limit as white noise.

\subsection{Triviality of any limit point}

The following proposition establishes that any limit point of a smeared observable is Gaussian in the subcritical phase, with quantitative bounds on the error in the Laplace transform.

\begin{prop}\label{prop: triviality} Let $d \geq 2$,  $\boldsymbol{\alpha}, \mathbf{C} > 0$ in the interaction \eqref{eq: interaction}, and $\beta < \beta_c$. For all $f\in \mathcal{C}^\infty_c(\mathbb R^d)$ and $z\in \mathbb R$, there exists $C=C(d,\beta,J,f)>0$ such that, for all $L\geq 1$,
\begin{equs}
    \left|\left\langle \exp\left(z T_{f,L,\beta}(\sigma)\right)\right\rangle_\beta-\exp\left(\frac{z^2}{2}\langle T_{f,L,\beta}(\sigma)^2\rangle_\beta\right)\right|
    \leq
    \frac{C}{L^d}.
\end{equs}
\end{prop}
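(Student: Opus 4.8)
The plan is to exploit the Griffiths--Simon / Lebowitz-type inequalities for the Ising model, which control all truncated (Ursell) correlation functions by those of order four, combined with the subcritical bound on the susceptibility coming from Proposition \ref{prop: sharp asymptotics}. Write $X = T_{f,L,\beta}(\sigma)$, which is a finite linear combination $X = \sum_x a_x \sigma_x$ with $a_x = \frac{2^{d/2}}{\sqrt{\Sigma_L(\beta)}} f(x/L)$. The quantity to estimate is $\langle e^{zX}\rangle_\beta - e^{\frac{z^2}{2}\langle X^2\rangle_\beta}$. Expanding the cumulant generating function, $\log \langle e^{zX}\rangle_\beta = \sum_{n\geq 1} \frac{z^n}{n!} \kappa_n(X)$, where $\kappa_n(X)$ is the $n$-th cumulant; since $\langle \cdot\rangle_\beta$ is even in $\sigma$, odd cumulants vanish, and $\kappa_2(X) = \langle X^2\rangle_\beta$. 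So it suffices to show that $\sum_{n\geq 2}\frac{z^{2n}}{(2n)!}|\kappa_{2n}(X)| = O(L^{-d})$, which after exponentiating (and using that $\langle X^2 \rangle_\beta$ is bounded uniformly in $L$, so $e^{\frac{z^2}{2}\langle X^2\rangle_\beta}$ is bounded) yields the claim. The dominant term is $n=2$, so the crux is the bound $|\kappa_4(X)| = O(L^{-d})$, with the higher cumulants handled by the same mechanism (Gaussian-domination / tree-graph bounds) to ensure summability.

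The key input is the Aizenman--Fröhlich inequality (or the equivalent tree-graph bound of Newman), which gives
\begin{equs}
0 \leq -\kappa_4(X) = -\langle X;X;X;X\rangle_\beta \leq C\beta \sum_{x,y,z,w} |a_x a_y a_z a_w| \sum_{u} \langle \sigma_x\sigma_u\rangle_\beta \langle \sigma_y\sigma_u\rangle_\beta \langle \sigma_z\sigma_u\rangle_\beta \langle \sigma_w\sigma_u\rangle_\beta,
\end{equs}
and more generally the $2n$-th truncated correlation is bounded by a sum over tree graphs on $2n$ vertices of products of two-point functions. Since $|a_x| \leq C \Sigma_L(\beta)^{-1/2} \|f\|_\infty \mathbf 1_{|x|\leq CL}$ and $\Sigma_L(\beta) \asymp L^d$ (because $\chi(\beta) = \sum_x \langle\sigma_0\sigma_x\rangle_\beta < \infty$ for $\beta<\beta_c$, so $\Sigma_L(\beta) = \sum_{x,y\in\Lambda_L}\langle\sigma_x\sigma_y\rangle_\beta \asymp |\Lambda_L| \chi(\beta) \asymp L^d$), each factor $|a_x|$ contributes $O(L^{-d/2})$. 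For the $\kappa_4$ bound: summing $\langle\sigma_x\sigma_u\rangle_\beta$ over $x$ gives $\chi(\beta)<\infty$; doing this for three of the four sums, then summing the remaining $\langle\sigma_w\sigma_u\rangle_\beta$ over $w$ and then the free variable $u$ over $\Lambda_{CL}$ (constrained by the supports of the $a$'s) gives a factor $O(L^d)$. Thus $|\kappa_4(X)| \leq C \beta (L^{-d/2})^4 \cdot L^d = C\beta L^{-d}$. The general $2n$-th cumulant: a tree on $2n$ vertices has $2n-1$ edges; each of the $2n$ leaves/vertices carries an $a$-coefficient ($(2n)$ factors of $L^{-d/2}$), summing along the tree collapses each two-point function into a factor of $\chi(\beta)$, and one is left with $n$ free "internal" sums each of volume $O(L^d)$ — wait, more carefully, the standard bound gives $|\kappa_{2n}(X)| \leq (2n)! \, C^n \beta^{n-1} \chi(\beta)^{\cdots} (L^{-d/2})^{2n} (L^d)^{?}$; the combinatorial/power counting needs to be done so that the net power of $L$ is $-d$ (independent of $n$) times a constant $c^n$ summable in $n$. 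This is exactly the content of the triviality proofs in \cite{A,frohlich1982triviality}.

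The main obstacle is organising the bookkeeping for the higher cumulants uniformly in $n$: one must verify that the tree-graph bound produces, after summation against the $a$-coefficients, a total of $O(c(\beta)^n L^{-d})$ with $c(\beta)$ finite and independent of $L$, so that $\sum_n \frac{z^{2n}}{(2n)!}|\kappa_{2n}| \leq L^{-d}\sum_n \frac{|z|^{2n} c(\beta)^n}{(2n)!} < \infty$. The power counting: in a tree on the index set of $2n$ summation variables, perform the sums from the leaves inward; each leaf-sum of a two-point function yields $\chi(\beta)$, and one continues until all but one vertex is exhausted, but crucially the $a$-coefficients force all variables into $\Lambda_{CL}$, and only the "last" residual sum over one vertex is genuinely volume-constrained, giving $|\Lambda_{CL}| = O(L^d)$; combined with $(2n)$ factors of $\Sigma_L^{-1/2} = O(L^{-d/2})$ this is $O(L^{-nd})\cdot O(L^d) = O(L^{-(n-1)d}) \leq O(L^{-d})$ for $n\geq 2$, actually giving room to spare. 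One must be a little careful that the tree-graph inequality for Ising (as opposed to $\varphi^4$) is available with the stated combinatorial factors — this is classical, going back to \cite{newman1980normal,newman1983general,A}, and I would cite it directly rather than reprove it. A secondary minor point is confirming the two-sided bound $\Sigma_L(\beta) \asymp L^d$, which follows from reflection/Griffiths positivity for the lower bound and $\chi(\beta)<\infty$ for the upper bound.
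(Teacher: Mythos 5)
Your core computation---the tree-diagram (Aizenman--Fr\"ohlich) bound on the fourth Ursell function, combined with $\Sigma_L(\beta)\asymp \chi(\beta)L^d$, giving $|\kappa_4(T_{f,L,\beta})|=O(L^{-d})$---is exactly the computational heart of the paper's proof, and your power counting there is essentially right. (One small slip: the branching vertex $u$ in the tree bound ranges over all of $\ZZ^d$, not over $\Lambda_{CL}$; you should bound three of the four inner sums by $\chi(\beta)$ and use the \emph{remaining} two-point function to control the $u$-sum, i.e. $\sum_{u\in\ZZ^d}\bigl(\sum_{x\in\Lambda_{CL}}\langle\sigma_x\sigma_u\rangle_\beta\bigr)^4\leq \chi(\beta)^3\sum_{x\in\Lambda_{CL}}\sum_{u}\langle\sigma_x\sigma_u\rangle_\beta=\chi(\beta)^4|\Lambda_{CL}|$. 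The paper instead splits according to whether $u$ is in $\Lambda_{2r_fL}$ or not; either fix works.)

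The genuine gap is in your treatment of the cumulants of order $\geq 6$. You reduce the proposition to $\sum_{n\geq 2}\frac{|z|^{2n}}{(2n)!}|\kappa_{2n}|=O(L^{-d})$ and then assert that each $|\kappa_{2n}|$ is controlled by a tree-graph inequality with combinatorial factors summable in $n$, but you neither state nor verify such an inequality, and your own hedging (``the combinatorial/power counting needs to be done'') flags that this is not established. Two concrete issues arise if you try to complete it: (i) a na\"ive sum over all labelled trees on $2n$ vertices contributes a Cayley factor of order $(2n)^{2n-2}$, which after dividing by $(2n)!$ still grows like $e^{2n}$ and would restrict the range of $z$ unless the inequality you invoke has better constants; (ii) the clean uniform-in-$n$ cumulant bounds for Ising (Newman's Lee--Yang arguments) are stated for variables $\sum_x a_x\sigma_x$ with $a_x\geq 0$, whereas $f$ may change sign, so one must first pass to $|f|$ and argue that this dominates the error. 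The paper avoids the entire issue by invoking a single packaged inequality, \cite[Proposition~12.1]{A} (used as in \cite[Section~6.3]{ADC}), which bounds the full difference $\bigl|\langle e^{zT_{f,L,\beta}}\rangle_\beta-e^{\frac{z^2}{2}\langle T_{f,L,\beta}^2\rangle_\beta}\bigr|$ directly by $C z^4\exp\bigl(\frac{z^2}{2}\langle T_{|f|,L,\beta}^2\rangle_\beta\bigr)\|f\|_\infty^4\sum|U_4^\beta|/\Sigma_L(\beta)^2$; after that, only the $U_4$ estimate you already carried out is needed. So your route is the same in spirit, but to make it a proof you should either cite and correctly apply that generating-function inequality (which is what the paper does), or supply the uniform higher-cumulant bounds with explicit constants---as written, that step is missing.
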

\begin{proof} Recall that the 4-point Ursell function $U_4^\beta$ is defined for $x,y,z,t\in \mathbb Z^d$ by
\begin{equs}
U^\beta_4(x,y,z,t):=\langle \sigma_x\sigma_y\sigma_z\sigma_t\rangle_{\beta}-\langle \sigma_x\sigma_y\rangle_\beta\langle \sigma_z\sigma_t\rangle_\beta-\langle \sigma_x\sigma_z\rangle_\beta\langle \sigma_y\sigma_t\rangle_\beta-\langle \sigma_x\sigma_t\rangle_\beta\langle \sigma_y\sigma_z\rangle_\beta.
\end{equs}
Using \cite[Proposition~12.1]{A} as in \cite[Section~6.3]{ADC}, we get that for every $L\geq 1$,
\begin{equs}
    \left|\left\langle \exp\left(z T_{f,L,\beta}(\sigma)\right)\right\rangle_\beta-\exp\left(\frac{z^2}{2}\langle T_{f,L,\beta}(\sigma)^2\rangle_\beta\right)\right|
    \leq C_1 z^4\exp\left(\frac{z^2}{2}\langle T_{|f|,L,\beta}(\sigma)^2\rangle_\beta\right)\Vert f\Vert_\infty^4S(\beta,L,f),
\end{equs}
where
\begin{equs}
    S(\beta,L,f)=\sum_{x_1,x_2,x_3,x_4\in \Lambda_{r_f L}}\dfrac{\left|U_4^\beta(x_1,x_2,x_3,x_4)\right|}{\Sigma_L(\beta)^2},
\end{equs}
$r_f=\max( r\geq 0, \: \exists x\in \mathbb R^d, \: |x|=r, \: f(x)\neq 0) \vee 1$, and $C_1>0$. The tree diagram bound \cite[Proposition 5.3]{A} asserts
\begin{equs}\label{tree diagram bound trivia subcritic}
    |U_4^\beta(x_i,x_j,x_k,x_\ell)|\leq 2\sum_{x\in \mathbb Z^d}\langle \sigma_x\sigma_{x_i}\rangle_\beta\langle \sigma_x\sigma_{x_j}\rangle_\beta\langle \sigma_x\sigma_{x_k}\rangle_\beta\langle \sigma_x\sigma_{x_\ell}\rangle_\beta.
\end{equs}
Splitting the resulting sum,
\begin{equs}
    S(\beta,L,f)/2
    \leq
    \underbrace{\sum_{\substack{x \in \Lambda_{2r_{f}L} \\ x_1,x_2,x_3,x_4\in \Lambda_{r_{f}L}}}(\ldots)}_{(\mathrm{I})}+\underbrace{\sum_{\substack{x \notin \Lambda_{2r_{f}L} \\ x_1,x_2,x_3,x_4\in \Lambda_{r_{f}L}}}(\ldots)}_{(\mathrm{II})}.
\end{equs}
\textbf{Bound on (I)}. The first term can be written
\begin{equs}
    \sum_{\substack{x \in \Lambda_{2r_{f}L} \\ x_1,x_2,x_3,x_4\in \Lambda_{r_{f}L}}}\dfrac{\langle \sigma_x\sigma_{x_1}\rangle_\beta \langle \sigma_x\sigma_{x_2}\rangle_\beta \langle \sigma_x\sigma_{x_3}\rangle_\beta \langle \sigma_x\sigma_{x_4}\rangle_\beta}{\Sigma_L(\beta)^2}
    =\sum_{x\in \Lambda_{2r_f L}}\dfrac{\left(\sum_{y\in \Lambda_{r_f L}}\langle \sigma_x\sigma_y\rangle_\beta\right)^4}{\Sigma_L(\beta)^2}.
\end{equs}
Noticing that for $x \in \Lambda_{2r_f L}$, 
\begin{equs}
    \sum_{y\in \Lambda_{r_f L}}\langle \sigma_x\sigma_y\rangle_\beta\leq \chi_{3r_f L}(\beta)\leq \chi(\beta),
\end{equs}
one gets that for $C_2>0$,
\begin{equs}
    (1)
    \leq
    \chi(\beta)^4 \dfrac{|\Lambda_{2r_f L}|}{\Sigma_L(\beta)^2}
    \leq
    \frac{C_2}{L^d}.
\end{equs}
\textbf{Bound on (II)}. By \eqref{eq: spin decay} there exists $C_3>0$ such that for all $u,v\in \mathbb Z^d$ such that $u \neq v$,
\begin{equs}\label{upper bound subcritic}
    \langle\sigma_u\sigma_v\rangle_\beta\leq \dfrac{C_3}{|u-v|^{d+\boldsymbol{\alpha}}}.
\end{equs}
Thus, there exists $C_4,C_5,C_6>0$ such that
\begin{equs}
    (2)\leq \frac{C_4}{\Sigma_L(\beta)^2}L^{4d}\sum_{k\geq r_f L}\frac{k^{d-1}}{k^{4d+4\boldsymbol{\alpha}}}\leq C_5\frac{L^{4d}}{L^{3d+4\boldsymbol{\alpha}}\Sigma_L(\beta)^2}
    \leq 
    \frac{C_6}{L^{d+4\boldsymbol{\alpha}}}.
\end{equs}
This concludes the proof.
\end{proof}

\subsection{Computation of the covariance kernel}

We start with a useful lemma which estimates $\Sigma_L(\beta)$ as $L \rightarrow \infty$. 

\begin{lem}\label{lem: sigma} Let $d \geq 2$, $\boldsymbol{\alpha}, \mathbf{C} > 0$ in the interaction \eqref{eq: interaction}, and $\beta < \beta_c$. Then, for all $L \geq 1$,
\begin{equs}
\Sigma_L(\beta)=\chi(\beta)|\Lambda_L|\left(1+O(L^{-\frac{\boldsymbol{\alpha}}{\boldsymbol{\alpha}+1}})\right).
\end{equs}
\end{lem}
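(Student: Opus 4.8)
\textbf{Proof proposal for Lemma \ref{lem: sigma}.}

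The plan is to write $\Sigma_L(\beta) = \sum_{x \in \Lambda_L} \sum_{y \in \Lambda_L} \langle \sigma_x \sigma_y \rangle_\beta$ and, for each fixed $x$, compare the inner sum $\sum_{y \in \Lambda_L} \langle \sigma_x \sigma_y \rangle_\beta$ to the full susceptibility $\chi(\beta) = \sum_{z \in \ZZ^d} \langle \sigma_0 \sigma_z \rangle_\beta$. The error is controlled by two contributions: the ``missing'' tail $\sum_{y \notin \Lambda_L} \langle \sigma_x \sigma_y \rangle_\beta$, which is governed by the polynomial bound \eqref{eq: spin decay}, and the fact that this error is only small when $x$ is well inside $\Lambda_L$, i.e. at distance $\gg$ some scale from $\partial \Lambda_L$. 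So I would introduce an intermediate scale $\ell = \ell(L)$ and split $\Lambda_L$ into a ``bulk'' $B_\ell := \{ x \in \Lambda_L : \mathrm{dist}(x, \partial \Lambda_L) \geq \ell \}$ and a ``boundary layer'' $\Lambda_L \setminus B_\ell$, whose cardinality is $O(\ell L^{d-1})$.

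For $x$ in the bulk, $\Lambda_{\ell}(x) \subset \Lambda_L$, so
\begin{equs}
\chi(\beta) \geq \sum_{y \in \Lambda_L} \langle \sigma_x \sigma_y \rangle_\beta \geq \sum_{y \in \Lambda_\ell(x)} \langle \sigma_x \sigma_y \rangle_\beta = \chi(\beta) - \sum_{y \notin \Lambda_\ell(x)} \langle \sigma_x \sigma_y \rangle_\beta,
\end{equs}
and by \eqref{eq: spin decay} the subtracted tail is at most $C \sum_{k \geq \ell} k^{d-1} k^{-d-\boldsymbol{\alpha}} \leq C' \ell^{-\boldsymbol{\alpha}}$. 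Hence each bulk term equals $\chi(\beta)(1 + O(\ell^{-\boldsymbol{\alpha}}/\chi(\beta)))$, and summing over the $|B_\ell| = |\Lambda_L|(1 + O(\ell/L))$ bulk sites gives a contribution $\chi(\beta)|\Lambda_L|(1 + O(\ell^{-\boldsymbol{\alpha}}) + O(\ell/L))$. For $x$ in the boundary layer, I use only the crude bound $0 \leq \sum_{y \in \Lambda_L}\langle \sigma_x \sigma_y\rangle_\beta \leq \chi(\beta)$, so the total boundary contribution is $O(\ell L^{d-1}) = O((\ell/L)|\Lambda_L|)$, which is already absorbed in the bulk error. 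Collecting terms, $\Sigma_L(\beta) = \chi(\beta)|\Lambda_L|(1 + O(\ell^{-\boldsymbol{\alpha}} + \ell/L))$, and optimising over $\ell$ by balancing $\ell^{-\boldsymbol{\alpha}} \asymp \ell/L$, i.e. $\ell \asymp L^{1/(\boldsymbol{\alpha}+1)}$, yields the error rate $L^{-\boldsymbol{\alpha}/(\boldsymbol{\alpha}+1)}$ claimed in the statement.

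I do not anticipate a serious obstacle here; the argument is a routine bulk/boundary decomposition with a tail estimate. The one point requiring a little care is the precise geometry in the $\infty$-norm box $\Lambda_L = [-L,L]^d \cap \ZZ^d$: the containment $\Lambda_\ell(x) \subset \Lambda_L$ for $x$ in the bulk and the cardinality estimate $|\Lambda_L \setminus B_\ell| = O(\ell L^{d-1})$ are both elementary but should be stated cleanly, and one should make sure the $O(\cdot)$ constants depend only on $d, \beta, J$ as per the paper's convention (the factor $\chi(\beta)$ being finite and positive for $\beta < \beta_c$ is exactly what makes this work, and is the only place subcriticality enters). It is also worth noting that for this lemma one only needs the polynomial bound \eqref{eq: spin decay}, not the sharp asymptotics \eqref{eq: exact asymptotic}.
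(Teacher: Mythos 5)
Your proof is correct and is essentially the paper's own argument: the paper also uses the trivial upper bound $\Sigma_L(\beta)\leq|\Lambda_L|\chi(\beta)$ and the lower bound $\Sigma_L(\beta)\geq|\Lambda_{L-\ell}|\chi_{\ell}(\beta)$ with the tail of the susceptibility controlled via \eqref{eq: spin decay}, which is exactly your bulk/boundary decomposition. The only cosmetic difference is that the paper plugs in the optimal scale $\ell=L^{1/(\boldsymbol{\alpha}+1)}$ from the start rather than optimising at the end.
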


\begin{proof}
First note that, by translation-invariance,
\begin{equs}
\Sigma_L(\beta)
=
\sum_{x,y \in \Lambda_L} \langle \sigma_x \sigma_y \rangle_\beta
\leq
|\Lambda_L| \chi(\beta)
\end{equs}
where we recall $\chi(\beta) = \sum_{x \in \ZZ^d} \langle \sigma_0 \sigma_x \rangle_\beta$ is the susceptibility.

On the other hand, denoting $L(\boldsymbol{\alpha}):=L^{\frac{1}{\boldsymbol{\alpha}+1}}$, we have
\begin{equs}
\sum_{x,y \in \Lambda_L} \langle \sigma_x \sigma_y \rangle_\beta
\geq
|\Lambda_{L-L(\boldsymbol{\alpha})}| \chi_{L(\boldsymbol{\alpha})}(\beta)
\end{equs}
where $\chi_{k}(\beta) = \sum_{x \in \Lambda_{k}} \langle \sigma_0 \sigma_x \rangle_\beta$. By \eqref{eq: spin decay}, there exists $C>0$ such that
\begin{equs}
    \chi_{L(\boldsymbol{\alpha})}(\beta)
    =
    \chi(\beta)\frac{\chi_{L(\boldsymbol{\alpha})}}{\chi(\beta)}
    =
    \chi(\beta)\left(1-\frac{\chi(\beta)-\chi_{L(\boldsymbol{\alpha})}(\beta)}{\chi(\beta)}\right)
    \geq
    \chi(\beta)\left(1-CL^{-\frac{\boldsymbol{\alpha}}{\boldsymbol{\alpha}+1}}\right).
\end{equs}
Therefore,
\begin{equs}
|\Lambda_{L-L(\boldsymbol{\alpha})}| \chi_{L(\boldsymbol{\alpha})}(\beta)
\geq 
|\Lambda_L|\chi(\beta)\left(1-CL^{-\frac{\boldsymbol{\alpha}}{\boldsymbol{\alpha}+1}}\right),
\end{equs}
from which we conclude.
\end{proof}

We now show that the covariance of the smeared observables is given by their $L^2$-scalar product in the limit $L \rightarrow \infty$. The proof uses a summable polynomial upper bound on the spin-spin correlations and lattice symmetries of the model.

\begin{prop}\label{prop: covariance estimate}
Let $d \geq 2$,  $\boldsymbol{\alpha}, \mathbf{C} > 0$ in the interaction \eqref{eq: interaction}, and $\beta < \beta_c$. Then, for all $f,g\in \cC_c^\infty(\mathbb R^d)$ and for all $L \geq 1$,
\begin{equs}
\langle T_{f,L,\beta}(\sigma) T_{g,L,\beta}(\sigma) \rangle_{\beta}
=
\left(1+O(L^{-\frac{\boldsymbol{\alpha}}{\boldsymbol{\alpha}+1}})\right)\int_{\RR^d} f(\tilde x) g(\tilde x) \mathrm{d}\tilde x.
\end{equs}
\end{prop}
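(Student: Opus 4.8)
The plan is to expand the covariance as a double sum over the lattice and compare it, block by block, to the continuum integral $\int_{\RR^d} f(\tilde x)g(\tilde x)\,\mathrm d\tilde x$. Unpacking the definitions,
\begin{equs}
\langle T_{f,L,\beta}(\sigma)T_{g,L,\beta}(\sigma)\rangle_\beta
=
\frac{2^d}{\Sigma_L(\beta)}\sum_{x,y\in\ZZ^d} f(x/L)g(y/L)\langle\sigma_x\sigma_y\rangle_\beta.
\end{equs}
By Lemma \ref{lem: sigma}, $\Sigma_L(\beta) = \chi(\beta)|\Lambda_L|(1+O(L^{-\boldsymbol{\alpha}/(\boldsymbol{\alpha}+1)}))$, and since $|\Lambda_L| = (2L+1)^d \sim 2^d L^d$, the prefactor $2^d/\Sigma_L(\beta)$ behaves like $(\chi(\beta) L^d)^{-1}(1+O(L^{-\boldsymbol{\alpha}/(\boldsymbol{\alpha}+1)}))$. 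So it suffices to show
\begin{equs}
\frac{1}{\chi(\beta)L^d}\sum_{x,y\in\ZZ^d} f(x/L)g(y/L)\langle\sigma_x\sigma_y\rangle_\beta
=
\Big(1+O(L^{-\frac{\boldsymbol{\alpha}}{\boldsymbol{\alpha}+1}})\Big)\int_{\RR^d} f(\tilde x)g(\tilde x)\,\mathrm d\tilde x.
\end{equs}

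First I would fix $y$ and write the inner sum over $x$ as $\sum_x f(x/L)\langle\sigma_x\sigma_y\rangle_\beta$. Split this using the scale $L(\boldsymbol{\alpha}) = L^{1/(\boldsymbol{\alpha}+1)}$: for $x$ with $|x-y|\le L(\boldsymbol{\alpha})$, replace $f(x/L)$ by $f(y/L)$, incurring an error controlled by $\|\nabla f\|_\infty \cdot L(\boldsymbol{\alpha})/L$ times $\sum_{|x-y|\le L(\boldsymbol{\alpha})}\langle\sigma_0\sigma_x\rangle_\beta \le \chi(\beta)$; this gives the main term $f(y/L)\chi_{L(\boldsymbol{\alpha})}(\beta) = f(y/L)\chi(\beta)(1+O(L^{-\boldsymbol{\alpha}/(\boldsymbol{\alpha}+1)}))$, using the polynomial decay \eqref{eq: spin decay} to bound the tail $\chi(\beta)-\chi_{L(\boldsymbol{\alpha})}(\beta) = O(L(\boldsymbol{\alpha})^{-\boldsymbol{\alpha}})=O(L^{-\boldsymbol{\alpha}/(\boldsymbol{\alpha}+1)})$ exactly as in Lemma \ref{lem: sigma}. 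For the far part $|x-y|>L(\boldsymbol{\alpha})$, bound $|f(x/L)|\le\|f\|_\infty$ and use \eqref{eq: spin decay} to get a contribution $O\big(\|f\|_\infty \sum_{k>L(\boldsymbol{\alpha})} k^{d-1}/k^{d+\boldsymbol{\alpha}}\big) = O(L(\boldsymbol{\alpha})^{-\boldsymbol{\alpha}}) = O(L^{-\boldsymbol{\alpha}/(\boldsymbol{\alpha}+1)})$, which is negligible after we sum over $y$ in the support of $g$ (an $O(L^d)$-size set) and divide by $L^d$. Here one must also note that only $y$ with $g(y/L)\neq 0$ contribute — so $y$ ranges over $\Lambda_{r_g L}$ with $|\Lambda_{r_g L}| = O(L^d)$ — and similarly the "interior" approximation $f(x/L)\approx f(y/L)$ is only needed for $y$ in this bounded region; boundary effects where $y/L$ is within $L(\boldsymbol{\alpha})/L$ of $\partial(\mathrm{supp}\,f)$ affect an $O(L^{d-1}\cdot L(\boldsymbol{\alpha}))$-size set of $y$'s, hence are $O(L^{-1}L(\boldsymbol{\alpha})/1) = O(L^{-\boldsymbol{\alpha}/(\boldsymbol{\alpha}+1)})$ after normalisation.

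After this first reduction we are left with
\begin{equs}
\Big(1+O(L^{-\frac{\boldsymbol{\alpha}}{\boldsymbol{\alpha}+1}})\Big)\frac{1}{L^d}\sum_{y\in\ZZ^d} f(y/L)g(y/L),
\end{equs}
and the final step is the standard Riemann-sum convergence: $L^{-d}\sum_{y\in\ZZ^d}(fg)(y/L) \to \int_{\RR^d}(fg)(\tilde x)\,\mathrm d\tilde x$, with the difference being $O(L^{-1})$ by the mean value theorem on each unit cube (here $fg\in\cC^\infty_c$, so in fact $O(L^{-p})$ for any $p$, but $O(L^{-1})$ suffices and is absorbed into $O(L^{-\boldsymbol{\alpha}/(\boldsymbol{\alpha}+1)})$ since $\boldsymbol{\alpha}/(\boldsymbol{\alpha}+1)<1$). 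Combining the three displays yields the claim. The main obstacle — more bookkeeping than genuine difficulty — is organising the error terms so that the replacement $f(x/L)\rightsquigarrow f(y/L)$ (whose per-$y$ cost is $O(L(\boldsymbol{\alpha})/L)$, \emph{not} $O(L^{-\boldsymbol{\alpha}/(\boldsymbol{\alpha}+1)})$ pointwise) aggregates correctly: one uses that $\sum_{|x-y|\le L(\boldsymbol{\alpha})}|x-y|\,\langle\sigma_0\sigma_x\rangle_\beta$ is actually $O(L(\boldsymbol{\alpha})\wedge 1)$-bounded... more carefully, $\sum_x |x|\langle\sigma_0\sigma_x\rangle_\beta$ may diverge, so the Taylor remainder must be kept as $\|\nabla f\|_\infty L^{-1}\sum_{|x-y|\le L(\boldsymbol{\alpha})}|x-y|\langle\sigma_0\sigma_x\rangle_\beta = O(L^{-1}\cdot L(\boldsymbol{\alpha})^{1-\boldsymbol{\alpha}})$ when $\boldsymbol{\alpha}<1$ (using \eqref{eq: spin decay}), which equals $O(L^{-1}L^{(1-\boldsymbol{\alpha})/(\boldsymbol{\alpha}+1)}) = O(L^{-2\boldsymbol{\alpha}/(\boldsymbol{\alpha}+1)})$, and is $O(L^{-1}\log L(\boldsymbol{\alpha}))$ or $O(L^{-1})$ when $\boldsymbol{\alpha}\ge 1$ — in all cases $o(L^{-\boldsymbol{\alpha}/(\boldsymbol{\alpha}+1)})$ after the $L^{-d}\sum_y$ normalisation over the bounded support. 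Tracking which regime of $\boldsymbol{\alpha}$ one is in, and confirming the stated uniform rate $L^{-\boldsymbol{\alpha}/(\boldsymbol{\alpha}+1)}$ dominates every error, is the one place where care is required.
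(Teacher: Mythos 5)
Your argument is correct and reaches the stated rate, but it takes a different route from the paper at the key step. The paper's proof fixes $x$, expands $g(y/L)$ around $x/L$ to \emph{second} order over a single macroscopic window $\Lambda_{LR_2}(x)$, kills the first-order term exactly using the reflection symmetry $y\mapsto 2x-y$ of $\langle\sigma_x\sigma_y\rangle_\beta$, and then bounds the quadratic remainder against the decay $|x-y|^{-d-\varepsilon}$ with $\varepsilon=\boldsymbol{\alpha}/(\boldsymbol{\alpha}+1)<2$; no intermediate scale is introduced. You instead use only the Lipschitz (first-order) approximation of $f$ and recover the rate by optimising a near/far splitting at the scale $L(\boldsymbol{\alpha})=L^{1/(\boldsymbol{\alpha}+1)}$, balancing the near-field cost $\|\nabla f\|_\infty L(\boldsymbol{\alpha})/L$ against the far-field tail $L(\boldsymbol{\alpha})^{-\boldsymbol{\alpha}}$ from \eqref{eq: spin decay} --- both of which equal $L^{-\boldsymbol{\alpha}/(\boldsymbol{\alpha}+1)}$ --- together with $\chi(\beta)-\chi_{L(\boldsymbol{\alpha})}(\beta)=O(L^{-\boldsymbol{\alpha}/(\boldsymbol{\alpha}+1)})$ as in Lemma \ref{lem: sigma}. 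Your version is more elementary and more robust (it uses no lattice symmetry, so it would survive for non-symmetric interactions), whereas the paper's symmetry-cancellation mechanism is the one that scales up to the higher-order Taylor subtractions needed for Propositions \ref{prop: case 1} and \ref{prop: case 2}, which is presumably why the authors set the computation up that way here. One small correction to your closing discussion: the worry that the per-$y$ replacement cost $O(L(\boldsymbol{\alpha})/L)$ is ``not $O(L^{-\boldsymbol{\alpha}/(\boldsymbol{\alpha}+1)})$'' is unfounded, since $L(\boldsymbol{\alpha})/L=L^{1/(\boldsymbol{\alpha}+1)-1}=L^{-\boldsymbol{\alpha}/(\boldsymbol{\alpha}+1)}$ identically; the crude bound $\sum_{|x-y|\le L(\boldsymbol{\alpha})}|x-y|\langle\sigma_x\sigma_y\rangle_\beta\le L(\boldsymbol{\alpha})\chi(\beta)$ already suffices, and the case analysis in $\boldsymbol{\alpha}$ at the end, while correct, is not needed.
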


\begin{proof}
Using Lemma \ref{lem: sigma} above, $\Sigma_L(\beta)=|\Lambda_L|\chi(\beta)(1+O(L^{-\frac{\boldsymbol{\alpha}}{\boldsymbol{\alpha}+1}}))=2^dL^d\chi(\beta)(1+O(L^{-\frac{\boldsymbol{\alpha}}{\boldsymbol{\alpha}+1}}))$ as $L\rightarrow \infty$.

Assume that ${\rm supp} (f) \cup {\rm supp } (g) \subset B(0,R_1)$, which is clearly contained in the unit ball of radius $R_1$ in the $|\cdot|_\infty$ norm. Choose $R_2 > 2 R_1$. Then,
\begin{equs}
\langle T_{f,L,\beta}(\sigma) T_{g,L,\beta}(\sigma) \rangle_\beta
&=
\frac{2^d}{\Sigma_L(\beta)} \sum_{x,y \in \ZZ^d} f(x/L) g(y/L) \langle \sigma_x \sigma_y \rangle_\beta
\\
&=
\frac{2^d}{\Sigma_L(\beta)} \sum_{x,y \in \Lambda_{L R_1}} f(x/L) g(y/L) \langle \sigma_x \sigma_y \rangle_\beta
\\
&=
\frac{2^d}{\Sigma_L(\beta)} \sum_{x \in \Lambda_{L R_1}} f(x/L) \sum_{y \in \Lambda_{L R_2}(x)} g(y/L) \langle \sigma_x \sigma_y \rangle_\beta
\\
&=
\frac{2^d}{\Sigma_L(\beta)} \sum_{x \in \Lambda_{L R_1}} f(x/L) \sum_{y \in \Lambda_{L R_2}(x)} \Big( g(y/L) - g(x/L)\Big) \langle \sigma_x \sigma_y \rangle_\beta
\\
&\qquad + 
\frac{2^d}{\Sigma_L(\beta)} \sum_{x \in \Lambda_{L R_1}} f(x/L) g(x/L) \chi_{L R_2}(\beta).
\end{equs}
Now, by Taylor's theorem,
\begin{equs}
g(y/L)-g(x/L)
=
\nabla g(x/L) \cdot \left(\frac{y-x}{L}\right)+O\left(\frac{|y-x|^2}{L^2}\right).
\end{equs}
One has, by the invariance of the spin-spin correlations under lattice symmetries,
\begin{equs}
\sum_{y\in \Lambda_{LR_2}(x)}\nabla g(x/L)\cdot \left(\frac{y-x}{L}\right) \langle \sigma_x \sigma_y \rangle_\beta
=
0.
\end{equs}
By \eqref{eq: spin decay}, we may choose $\varepsilon=\varepsilon(\boldsymbol{\alpha})=\frac{\boldsymbol{\alpha}}{\boldsymbol{\alpha}+1}\in (0,\boldsymbol{\alpha} \wedge 1)$ and $C> 0$ such that
\begin{equs}
\langle \sigma_x \sigma_y \rangle_\beta
\leq
\frac{C}{|x-y|^{d+\varepsilon}}.
\end{equs}
Then, there exists $C'>0$ such that
\begin{equs}
\bigg|\frac{2^d}{\Sigma_L(\beta)} &\sum_{x \in \Lambda_{LR_1}} f(x/L) \sum_{y \in \Lambda_{LR_2}(x)} O\left( \frac{|x-y|^2}{L^2} \right) \langle \sigma_x \sigma_y \rangle_\beta \bigg|
\\
&\leq
\frac{2^d}{\Sigma_L(\beta)} \sum_{x \in \Lambda_{LR_1}} |f(x/L)| \sum_{y \in \Lambda_{LR_2}(x)} O\left( \frac{|x-y|^2}{L^2} \right) \frac{C}{|x-y|^{d+\varepsilon}}
\\
&\leq 
\frac{C'}{\Sigma_L(\beta)} \sum_{x \in \Lambda_{LR_1}} |f(x/L)| O(L^{-\varepsilon})
=
O(L^{-\varepsilon}).
\end{equs}

For the second term, by Riemann approximation and \eqref{eq: exact asymptotic}, 
\begin{equs}
\frac{2^d}{\Sigma_L(\beta)} \sum_{x \in \Lambda_{L R_1}} f(x/L) g(x/L) \chi_{L R_2}(\beta)
= 
(1+O(L^{-\varepsilon}))\int_{\R^d} f(\tilde x) g(\tilde x) \mathrm d\tilde x,
\end{equs}
from which the proof follows. 
\end{proof}

\subsection{Proof of white noise convergence}

\begin{proof}[Proof of Theorem \ref{thm: white noise}]
The proof of \eqref{eq: quantitative wn cv} follows directly from Propositions \ref{prop: triviality} and \ref{prop: covariance estimate}. Note that, by the linearity of the smeared observables, we have that, for any $f_1,\ldots,f_n\in \mathcal{C}^\infty_c(\RR^d)$, the vector $(T_{f_i,L,\beta}(\sigma))_{1\leq i\leq n}$ converges in distribution to a Gaussian vector distributed as $\{ W_0(f) : f \in \{f_1,\dots,f_n\} \}$. Thus, we have convergence of the set of smeared observables to white noise in finite dimensional distributions.
\end{proof}

\section{Emergence of fractional GFF correlations}

In this section, we prove Theorem \ref{thm: fgff} and Corollary \ref{cor: fgff conv}. We treat the cases $\boldsymbol{\alpha} \in \RR_{> 0} \setminus 2\NN^*$ and $\boldsymbol{\alpha} \in 2\NN^*$ separately. We start by showing that the expectation of the rescaled, renormalised product of smeared observables converges. This heavily exploits the exact asymptotic of the two-point function in Proposition \ref{prop: sharp asymptotics}. We combine this with Lemma \ref{lem: taylor pizetti} to show that the limiting expression agrees, up to a constant, with the covariance kernel of the fractional GFF. 

\begin{rem}\label{rem: spherical}
The computation exploits \textit{full} rotational symmetry of the interaction and this is the main reason why the Euclidean norm $|\cdot|$ is chosen in the interaction $J$. Otherwise, the choice of norm appears in the limit and does not agree with the fractional GFF kernel. In fact, it is not obvious that the limiting expression can be identified with a covariance kernel (it is not a priori symmetric or positive semi-definite).
\end{rem}

\subsection{Case $\boldsymbol{\alpha}\in \RR_{> 0} \setminus 2\NN^*$}

\begin{prop} \label{prop: case 1}
Let $d \geq 2$, $\boldsymbol{\alpha}\in \mathbb R_{> 0}\setminus 2\mathbb N^*$ and $\mathbf{C} > 0$ in the interaction \eqref{eq: interaction}, and $\beta < \beta_c$. Then, there exists $\kappa >0$ sufficiently small such that, for all $f,g \in \cC^\infty_c(\RR^d)$ and for all $L \geq 1$, 
\begin{equs}
L^{\boldsymbol{\alpha}} \Big\langle  \cR_{\boldsymbol{\alpha},\beta,L}\left[T_{f,L,\beta}(\sigma) T_{g,L,\beta}(\sigma)\right]\Big\rangle_\beta
=
\frac{\beta\chi(\beta)\mathbf{C}}{2}\tilde K_{\boldsymbol{\alpha},d}(f,g) \Big(1 + O(L^{-\kappa}) \Big),
\end{equs}
where
\begin{equs}
\tilde K_{\boldsymbol{\alpha},d}(f,g) 
=
\int_{\mathbb R^d \times \RR^d}f(\tilde x)|\tilde x-\tilde y|^{-d-\boldsymbol{\alpha}}\left(g(\tilde y)-{\rm Tay}_{\lfloor\boldsymbol{\alpha}\rfloor_2} g(\tilde y;\tilde x) \right)\mathrm{d}\tilde x\mathrm{d} \tilde y.
\end{equs}
\end{prop}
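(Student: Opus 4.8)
\textbf{Proof proposal for Proposition \ref{prop: case 1}.}

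The plan is to follow closely the structure of the proof of Proposition \ref{prop: covariance estimate}, but now keeping track of the subleading term via the sharp asymptotics \eqref{eq: exact asymptotic} rather than discarding it. First I would substitute $\Sigma_L(\beta) = 2^d L^d \chi(\beta)(1 + O(L^{-\boldsymbol{\alpha}/(\boldsymbol{\alpha}+1)}))$ from Lemma \ref{lem: sigma}, so that the prefactor becomes $\frac{2^d}{\Sigma_L(\beta)} = \frac{1}{L^d \chi(\beta)}(1+O(L^{-\boldsymbol{\alpha}/(\boldsymbol{\alpha}+1)}))$. Then $L^{\boldsymbol{\alpha}} \langle \cR_{\boldsymbol{\alpha},\beta,L}[\cdots]\rangle_\beta$ equals, up to the multiplicative error,
\begin{equs}
\frac{L^{\boldsymbol{\alpha}-d}}{\chi(\beta)} \sum_{x,y \in \ZZ^d} f(x/L)\big(g(y/L) - {\rm Tay}_{\lfloor\boldsymbol{\alpha}\rfloor_2} g(y/L; x/L)\big) \langle \sigma_x \sigma_y \rangle_\beta.
\end{equs}
Because $f,g$ are compactly supported, only $x$ in a box of size $O(L)$ contribute, and for the inner sum I would split $y$ into the near regime $|x-y| \leq L R_2$ and the far regime $|x-y| > L R_2$; in the far regime the factor $g(y/L)$ vanishes (for $R_2$ large relative to the supports) while the Taylor polynomial grows only polynomially, and combined with \eqref{eq: spin decay} this contributes a negligible error. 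In the near regime I would insert the asymptotic \eqref{eq: exact asymptotic}: $\langle\sigma_x\sigma_y\rangle_\beta = \frac{\beta\chi(\beta)^2}{2}\frac{\mathbf C}{|x-y|^{d+\boldsymbol{\alpha}}}(1 + O(|x-y|^{-\delta}))$, so the main term becomes $\frac{\beta\chi(\beta)\mathbf C}{2} L^{\boldsymbol{\alpha}-d}\sum_{x,y} f(x/L)\big(g(y/L)-{\rm Tay}(\cdots)\big)|x-y|^{-d-\boldsymbol{\alpha}}$.

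Next I would rescale: writing $\tilde x = x/L$, $\tilde y = y/L$, the summand is $L^{-d-\boldsymbol{\alpha}}$ times $f(\tilde x)(g(\tilde y) - {\rm Tay}_{\lfloor\boldsymbol{\alpha}\rfloor_2}g(\tilde y;\tilde x))|\tilde x - \tilde y|^{-d-\boldsymbol{\alpha}}$, so multiplying by $L^{\boldsymbol{\alpha}-d}$ and by the $L^{-2d}$ implicit in the double Riemann sum's mesh... wait — more carefully, $\sum_{x,y}(\cdots)$ with mesh $1/L$ in each of the $2d$ variables approximates $L^{2d}\int\int(\cdots)\,\mathrm d\tilde x\,\mathrm d\tilde y$, so $L^{\boldsymbol{\alpha}-d}\cdot L^{-d-\boldsymbol{\alpha}}\cdot L^{2d} = 1$ and the sum converges to $\tilde K_{\boldsymbol{\alpha},d}(f,g)$. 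The key analytic point to check here is that the integrand is absolutely integrable near the diagonal: since $g$ is smooth, $g(\tilde y) - {\rm Tay}_{\lfloor\boldsymbol{\alpha}\rfloor_2}g(\tilde y;\tilde x) = O(|\tilde x - \tilde y|^{\lfloor\boldsymbol{\alpha}\rfloor_2 + 1})$, and since $\lfloor\boldsymbol{\alpha}\rfloor_2 + 1 > \boldsymbol{\alpha}$ (this is exactly why the even-integer case is excluded — there $\lfloor\boldsymbol{\alpha}\rfloor_2 = \boldsymbol{\alpha}-2$ would give borderline logarithmic divergence), the singularity $|\tilde x-\tilde y|^{-d-\boldsymbol{\alpha}+\lfloor\boldsymbol{\alpha}\rfloor_2+1}$ is integrable. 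I would also need the Riemann-sum convergence to come with a polynomial rate; this follows from the smoothness of $f$, $g$ together with the fact that near the diagonal one can pair up lattice points using the cancellation built into the Taylor subtraction (analogous to the symmetry cancellation used in Proposition \ref{prop: covariance estimate}), while away from the diagonal the integrand is smooth so standard Euler–Maclaurin-type estimates give the rate.

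The error terms to control and combine are: (i) the $O(L^{-\boldsymbol{\alpha}/(\boldsymbol{\alpha}+1)})$ from $\Sigma_L$; (ii) the $O(|x-y|^{-\delta})$ correction in the two-point asymptotics, which after rescaling contributes $O(L^{-\delta'})$ for some $\delta' > 0$ — here one must check this extra decay does not destroy diagonal integrability, which it does not since it only improves decay of the kernel; (iii) the far-regime truncation error, exponentially/polynomially small; (iv) the Riemann-sum discretisation error. Taking $\kappa$ to be the minimum of these exponents gives the claimed $O(L^{-\kappa})$. The main obstacle I anticipate is item (ii) combined with (iv): controlling the Riemann sum uniformly near the diagonal when the summand has a non-integrable-looking singularity before the Taylor cancellation is applied. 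This requires care — one cannot simply bound $|g(\tilde y) - {\rm Tay}(\cdots)|$ pointwise by the remainder estimate at the lattice scale without checking the constant does not blow up; the cleanest route is to keep the Taylor remainder in integral (Lagrange) form, bound it by $\sup$ of the $(\lfloor\boldsymbol{\alpha}\rfloor_2+1)$-th derivative of $g$ times $|\tilde x - \tilde y|^{\lfloor\boldsymbol{\alpha}\rfloor_2+1}$, and then the sum $\sum_{y} |x-y|^{\lfloor\boldsymbol{\alpha}\rfloor_2+1-d-\boldsymbol{\alpha}}$ over the near regime converges (uniformly in $L$ after rescaling), which legitimises dominated-convergence-type control of the discretisation.
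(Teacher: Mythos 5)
Your overall strategy --- substitute $\Sigma_L(\beta)$ via Lemma \ref{lem: sigma}, insert the sharp asymptotics \eqref{eq: exact asymptotic}, and recognise a Riemann sum for $\tilde K_{\boldsymbol{\alpha},d}(f,g)$ --- is the same as the paper's, but your treatment of the diagonal contains a genuine gap. You claim that $\lfloor\boldsymbol{\alpha}\rfloor_2+1>\boldsymbol{\alpha}$, so that the Lagrange bound $|g(\tilde y)-{\rm Tay}_{\lfloor\boldsymbol{\alpha}\rfloor_2}g(\tilde y;\tilde x)|\leq C|\tilde x-\tilde y|^{\lfloor\boldsymbol{\alpha}\rfloor_2+1}$ makes the kernel absolutely integrable near the diagonal. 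This is false on half the parameter range: since $\lfloor t\rfloor_2=\max\{k\in2\NN:\,k<t\}$, one has $\lfloor\boldsymbol{\alpha}\rfloor_2+1\leq\boldsymbol{\alpha}$ whenever $\boldsymbol{\alpha}\in[1,2)\cup[3,4)\cup[5,6)\cup\dots$. For instance, for $\boldsymbol{\alpha}=3/2$ one has $\lfloor\boldsymbol{\alpha}\rfloor_2=0$ and your bound gives $|\tilde x-\tilde y|^{-d-1/2}$, which is not integrable at the diagonal; $\tilde K_{\boldsymbol{\alpha},d}$ only exists there as a principal value. Correspondingly, your ``cleanest route'' of bounding $\sum_y|x-y|^{\lfloor\boldsymbol{\alpha}\rfloor_2+1-d-\boldsymbol{\alpha}}$ over a diagonal neighbourhood produces, after the $L^{\boldsymbol{\alpha}}$ rescaling, a contribution of order $L^{\boldsymbol{\alpha}-\lfloor\boldsymbol{\alpha}\rfloor_2-1}$, which diverges in that range. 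The missing step --- which you gesture at via ``the cancellation built into the Taylor subtraction'' but then explicitly set aside --- is to expand one order further, to $\lfloor\boldsymbol{\alpha}\rfloor_2+2$, and use the reflection symmetry of $\langle\sigma_x\sigma_y\rangle_\beta$ to annihilate the odd $(\lfloor\boldsymbol{\alpha}\rfloor_2+1)$-order term exactly; the surviving $(\lfloor\boldsymbol{\alpha}\rfloor_2+2)$-order remainder is always subcritical because $\lfloor\boldsymbol{\alpha}\rfloor_2+2>\boldsymbol{\alpha}$ for every $\boldsymbol{\alpha}\notin2\NN^*$. The paper does this on a mesoscopic box $\Lambda_{L^\varepsilon}(x)$, obtaining $\sup_x|T_1(x,L)|=O\big(L^{\varepsilon(\lfloor\boldsymbol{\alpha}\rfloor_2+2-\boldsymbol{\alpha})-\lfloor\boldsymbol{\alpha}\rfloor_2-2}\big)=o(L^{-\boldsymbol{\alpha}})$; the same mesoscopic cutoff also guarantees that the multiplicative error $O(|x-y|^{-\delta})$ in \eqref{eq: exact asymptotic} is $O(L^{-\varepsilon\delta})$ wherever the asymptotics are actually used, a point your item (ii) glosses over for $|x-y|=O(1)$.

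A second, smaller problem: you discard the far regime $|x-y|>LR_2$ (with $R_2$ fixed) as negligible. It is not. There $g(y/L)=0$, but the subtracted Taylor polynomial contributes $-\sum_x f(x/L)\sum_{|y-x|>LR_2}{\rm Tay}_{\lfloor\boldsymbol{\alpha}\rfloor_2}g(y/L;x/L)\,\langle\sigma_x\sigma_y\rangle_\beta$, whose even-order terms survive angular averaging: already the $j=0$ term gives $g(x/L)\sum_{|z|>LR_2}\langle\sigma_0\sigma_z\rangle_\beta\asymp L^{-\boldsymbol{\alpha}}R_2^{-\boldsymbol{\alpha}}$, hence a contribution of order one after multiplying by $L^{\boldsymbol{\alpha}}$ --- exactly matching the nonzero part of $\tilde K_{\boldsymbol{\alpha},d}(f,g)$ coming from $|\tilde x-\tilde y|>R_2$. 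You must either keep this region in the Riemann sum (harmless, since the integrand is smooth and integrable at infinity there; this is what the paper does, its $T_2(x,L)$ running over all of $(\Lambda_{L^\varepsilon}(x))^c$) or let $R_2$ grow with $L$. As written, dropping it changes the limit by an $R_2$-dependent constant.
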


\begin{proof}
By adding and substracting the Taylor polynomial, we find
\begin{equs}
\langle T_{f,L,\beta}(\sigma) T_{g,L,\beta}(\sigma) \rangle_\beta 
&=
\frac{2^d}{\Sigma_L(\beta)} \sum_{x \in \ZZ^d} f(x/L) \sum_{y \in \ZZ^d \setminus \{x\}} \Big( g(y/L) - {\rm Tay}_{\lfloor\boldsymbol{\alpha}\rfloor_2}g(y/L;x/L)\Big) \langle \sigma_x \sigma_y \rangle_\beta
\\
&\qquad +
\frac{2^d}{\Sigma_L(\beta)} \sum_{x,y \in \ZZ^d} f(x/L) {\rm Tay}_{\lfloor\boldsymbol{\alpha}\rfloor_2}g(y/L;x/L) \langle \sigma_x \sigma_y \rangle_\beta.
\end{equs}
Note that
\begin{equs}
\Big\langle \cR_{\boldsymbol{\alpha},\beta,L}&\left[T_{f,L,\beta}(\sigma) T_{g,L,\beta}(\sigma)\right]\Big\rangle_\beta
\\
&=
\langle T_{f,L,\beta}(\sigma) T_{g,L,\beta}(\sigma) \rangle_\beta  - \frac{2^d}{\Sigma_L(\beta)} \sum_{x,y \in \ZZ^d} f(x/L) {\rm Tay}_{\lfloor\boldsymbol{\alpha}\rfloor_2}g(y/L;x/L) \langle \sigma_x \sigma_y \rangle_\beta.
\end{equs}
We analyse the remainder term. First, for $\varepsilon\in (0,1)$,
\begin{equs}
    T(L)
    &:=
    \frac{2^d}{\Sigma_L(\beta)} \sum_{x \in \ZZ^d} f(x/L) \sum_{y \in \ZZ^d \setminus \{x\}} \Big( g(y/L) - {\rm Tay}_{\lfloor\boldsymbol{\alpha}\rfloor_2}g(y/L;x/L) \Big) \langle \sigma_x \sigma_y \rangle_\beta 
    \\
    &=
    \frac{2^d}{\Sigma_L(\beta)} \sum_{x \in \ZZ^d} f(x/L) \sum_{y \in \Lambda_{L^{\varepsilon}}(x)} \Big( g(y/L) - {\rm Tay}_{\lfloor\boldsymbol{\alpha}\rfloor_2}g(y/L;x/L)\Big) \langle \sigma_x \sigma_y \rangle_\beta
    \\&\qquad
    + \frac{2^d}{\Sigma_L(\beta)} \sum_{x \in \ZZ^d} f(x/L) \sum_{y \in (\Lambda_{L^{\varepsilon}}(x))^c} \Big( g(y/L) - {\rm Tay}_{\lfloor\boldsymbol{\alpha}\rfloor_2}g(y/L;x/L)\Big) \langle \sigma_x \sigma_y \rangle_\beta 
    \\&  
    =:
    \frac{2^d}{\Sigma_L(\beta)} \sum_{x \in \ZZ^d} f(x/L) T_1(x,L) + f(x/L) T_2(x,L).
\end{equs}
Fix $x \in \ZZ^d$. Then, by Taylor's Theorem applied to $g(y/L)$ expanded around $x$ to order $\lfloor \boldsymbol{\alpha} \rfloor_2 + 2$, there exists $z(x) \in \RR^d$ such that
\begin{equs}
T_1(x,L)
&=
\sum_{y \in \Lambda_{L^{\varepsilon}}(x)} \Bigg( \frac{1}{(\lfloor \boldsymbol{\alpha} \rfloor_2 + 1)!} \nabla^{\lfloor \boldsymbol{\alpha} \rfloor_2 + 1}g(x/L) (y/L-x/L,\dots,y/L-x/L) 
\\
&\qquad \qquad  
+ \frac{1}{(\lfloor \boldsymbol{\alpha} \rfloor_2 + 2)!} \nabla^{\lfloor \boldsymbol{\alpha} \rfloor_2 + 2}g(z(x)/L) (y/L-x/L,\dots,y/L-x/L) \Bigg) \langle \sigma_x \sigma_y \rangle_\beta.
\end{equs}
By symmetry of the two-point function, 
\begin{equs}
\sum_{y \in \Lambda_{L^{\varepsilon}}(x)} \frac{1}{(\lfloor \boldsymbol{\alpha} \rfloor_2 + 1)!} \nabla^{\lfloor \boldsymbol{\alpha} \rfloor_2 + 1}g(x/L) (y/L-x/L,\dots,y/L-x/L) \langle \sigma_x \sigma_y \rangle_\beta
=
0.
\end{equs}
Hence, by the upper bound on spin correlations \eqref{eq: spin decay}, we have for some  $C_1>0$,
\begin{equs}
    \sup_{x \in \ZZ^d}|T_1(x,L)|
    &\leq
    \frac{C_1\| \nabla^{\lfloor \boldsymbol{\alpha} \rfloor_2+2} g\|_{L^\infty}}{L^{\lfloor \boldsymbol{\alpha} \rfloor_2+2}} \sum_{y \in \Lambda_{L^{\varepsilon}}(x)} \frac{1}{|x-y|^{d+\boldsymbol{\alpha}-\lfloor \boldsymbol{\alpha} \rfloor_2-2}}
    =
    O\Bigg(\frac{L^{\varepsilon(\lfloor \boldsymbol{\alpha} \rfloor_2+2-\boldsymbol{\alpha})}}{L^{\lfloor \boldsymbol{\alpha} \rfloor_2+2}} \Bigg)
\end{equs}
Thus, by the above displays together with Proposition \ref{prop: sharp asymptotics} and Lemma \ref{lem: sigma},  choosing $\kappa>0$ sufficiently small, we have that
\begin{equs}
 L^{\boldsymbol{\alpha}} &\Big\langle \cR_{\boldsymbol{\alpha},\beta,L}\left[T_{f,L,\beta}(\sigma) T_{g,L,\beta}(\sigma)\right]\Big\rangle_\beta
 \\
 &=
 \frac{\beta \chi(\beta)^2 \mathbf{C}}{2}\frac{2^d}{\Sigma_L(\beta) L^d} \sum_{x \in \ZZ^d} f(x/L) \sum_{y \in (\Lambda_{L^\epsilon}(x))^c}
 \frac{g(y/L) - {\rm Tay}_{\lfloor\boldsymbol{\alpha}\rfloor_2}g(y/L;x/L)}{|x/L-y/L|^{d+\boldsymbol{\alpha}}} \Big(1 + O(|x-y|^{-\delta}) \Big) 
 \\
 &\qquad \qquad +
 \frac{2^d}{\Sigma_L(\beta)} \sum_{x \in \ZZ^d} f(x/L)  O\Bigg(\frac{L^{\varepsilon(\lfloor \boldsymbol{\alpha} \rfloor_2+2-\boldsymbol{\alpha})}}{L^{\lfloor \boldsymbol{\alpha} \rfloor_2+2-\boldsymbol{\alpha}}} \Bigg)
 \\
 &=
 \frac{\beta \chi(\beta) \mathbf{C}}{2} \tilde K_{\boldsymbol{\alpha},d}(f,g) \Big(1 + O(L^{-\kappa}) \Big),
\end{equs}
which concludes the proof.
\end{proof}

\subsection{Case $\boldsymbol{\alpha}\in 2\NN^*$}

\begin{prop} \label{prop: case 2}
Let $d \geq 2$, $\boldsymbol{\alpha}\in 2\mathbb N^*$ and $\mathbf{C} > 0$ in the interaction \eqref{eq: interaction}, and let $\beta < \beta_c$. Then, there exists $\kappa>0$ sufficiently small such that, for all $f,g \in \cC^\infty_c(\RR^d)$ and for all $L \geq 1$
\begin{equs}
\frac{L^{\boldsymbol{\alpha}}}{\log L} \Big\langle \tilde \cR_{\boldsymbol{\alpha},\beta,L}\left[T_{f,L,\beta}(\sigma) T_{g,L,\beta}(\sigma)\right]\Big\rangle_\beta
=
(-1)^{\boldsymbol{\alpha}/2}(4\pi^2)^{\boldsymbol{\alpha}/2}H_{\boldsymbol{\alpha}/2}\frac{\beta\chi(\beta)\mathbf{C}}{2} K_{\boldsymbol{\alpha},d}(f,g) \Big(1 + O((\log L)^{-\kappa}) \Big)
\end{equs}
where $K_{\boldsymbol{\alpha},d}$ is defined in Definition \textup{\ref{def: fgff}}.
\end{prop}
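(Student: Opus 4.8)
The plan is to mimic the proof of Proposition~\ref{prop: case 1} as closely as possible, isolating the place where the even-integer case genuinely differs. As before, I would write
\[
\Big\langle \cR_{\boldsymbol{\alpha},\beta,L}\left[T_{f,L,\beta}(\sigma) T_{g,L,\beta}(\sigma)\right]\Big\rangle_\beta
=
\frac{2^d}{\Sigma_L(\beta)}\sum_{x\in\ZZ^d} f(x/L)\sum_{y\in\ZZ^d\setminus\{x\}}\big(g(y/L)-{\rm Tay}_{\lfloor\boldsymbol{\alpha}\rfloor_2}g(y/L;x/L)\big)\langle\sigma_x\sigma_y\rangle_\beta,
\]
split the inner sum at $|x-y|_\infty = L^\varepsilon$, and Taylor-expand $g$ around $x/L$ to order $\lfloor\boldsymbol{\alpha}\rfloor_2+2$ on the near region. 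The odd-order terms vanish by the lattice symmetry of $\langle\sigma_x\sigma_y\rangle_\beta$ exactly as before, so the near-region contribution is again $O(L^{\varepsilon(\lfloor\boldsymbol{\alpha}\rfloor_2+2-\boldsymbol{\alpha})-(\lfloor\boldsymbol{\alpha}\rfloor_2+2)})$. On the far region I would insert the sharp asymptotic \eqref{eq: exact asymptotic} and Lemma~\ref{lem: sigma} to reduce, up to $(1+O(L^{-\kappa}))$, to
\[
\frac{\beta\chi(\beta)\mathbf C}{2}\cdot\frac{1}{L^d}\sum_{x\in\ZZ^d} f(x/L)\sum_{y\in(\Lambda_{L^\varepsilon}(x))^c}\frac{g(y/L)-{\rm Tay}_{\lfloor\boldsymbol{\alpha}\rfloor_2}g(y/L;x/L)}{|x/L-y/L|^{d+\boldsymbol{\alpha}}}.
\]
The crucial difference: since $\boldsymbol{\alpha}\in 2\NN^*$, we have $\lfloor\boldsymbol{\alpha}\rfloor_2=\boldsymbol{\alpha}-2$, so the integrand $|\tilde x-\tilde y|^{-d-\boldsymbol{\alpha}}\big(g(\tilde y)-{\rm Tay}_{\boldsymbol{\alpha}-2}g(\tilde y;\tilde x)\big)$ behaves like $|\tilde x-\tilde y|^{-d+2}\cdot\|\nabla^{\boldsymbol{\alpha}}g\|$ near $\tilde y=\tilde x$ — but now the \emph{far-field} decay is only like $|\tilde x-\tilde y|^{-d}$ (up to the bounded numerator), so the $\tilde y$-integral over a ball of radius $\sim L^{1-\varepsilon}$ diverges logarithmically. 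This is the source of the $\log L$ normalisation: the Riemann sum over $y$ is $\sim c\log L$ rather than convergent.

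Concretely, the key step is to extract the logarithm. I would pass to polar coordinates around $x$ and use the rotational symmetry of the kernel: writing $\tilde y - \tilde x = r\omega$ with $\omega\in S^{d-1}$, the angular average of $g(\tilde x + r\omega)-{\rm Tay}_{\boldsymbol{\alpha}-2}g(\tilde x+r\omega;\tilde x)$ over the sphere is, by the Pizzetti/mean-value expansion (this is exactly the content of the invoked Lemma~\ref{lem: taylor pizetti}), equal to $r^{\boldsymbol{\alpha}} H_{\boldsymbol{\alpha}/2}\Delta^{\boldsymbol{\alpha}/2} g(\tilde x) + o(r^{\boldsymbol{\alpha}})$, where $H_{\boldsymbol{\alpha}/2}$ is precisely the constant in \eqref{def: H_j}. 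Here the spherical symmetry of $J$ is essential: only the full rotational average survives, and it picks out the Laplacian power. Consequently
\[
\int_{L^\varepsilon/L \le |\tilde y-\tilde x|\le R}\frac{g(\tilde y)-{\rm Tay}_{\boldsymbol{\alpha}-2}g(\tilde y;\tilde x)}{|\tilde x-\tilde y|^{d+\boldsymbol{\alpha}}}\,\mathrm d\tilde y
=
|S^{d-1}| H_{\boldsymbol{\alpha}/2}\Delta^{\boldsymbol{\alpha}/2}g(\tilde x)\int_{L^{\varepsilon-1}}^{R}\frac{\mathrm dr}{r} + O(1)
=
(1-\varepsilon)|S^{d-1}| H_{\boldsymbol{\alpha}/2}\Delta^{\boldsymbol{\alpha}/2}g(\tilde x)\log L + O(1),
\]
and after dividing by $\log L$ and letting $L\to\infty$ the $\varepsilon$-dependence and the $O(1)$ remainder wash out. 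Summing over $x$ via a Riemann approximation (with the fast Riemann-sum rate footnoted in the paper) gives $\int_{\RR^d} f(\tilde x)\,c_{d}\,H_{\boldsymbol{\alpha}/2}\Delta^{\boldsymbol{\alpha}/2}g(\tilde x)\,\mathrm d\tilde x$ for an explicit dimensional constant $c_d=|S^{d-1}|$; comparing with the definition of $K_{\boldsymbol{\alpha},d}$ for $\boldsymbol{\alpha}\in 2\NN^*$, namely $(4\pi^2)^{-\boldsymbol{\alpha}/2}\int f(-\Delta)^{\boldsymbol{\alpha}/2}g = (4\pi^2)^{-\boldsymbol{\alpha}/2}(-1)^{\boldsymbol{\alpha}/2}\int f\,\Delta^{\boldsymbol{\alpha}/2}g$, and absorbing the normalisations, yields exactly the stated prefactor $(-1)^{\boldsymbol{\alpha}/2}(4\pi^2)^{\boldsymbol{\alpha}/2}H_{\boldsymbol{\alpha}/2}\frac{\beta\chi(\beta)\mathbf C}{2}$. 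One bookkeeping point: the constant $c_d$ coming out of the spherical integration must be checked to match $|S^{d-1}|=2\pi^{d/2}/\Gamma(d/2)$, which is exactly the numerator appearing in $H_{\boldsymbol{\alpha}/2}$ in \eqref{def: H_j} — so the identification is consistent.

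The main obstacle is controlling the error terms \emph{uniformly} so that they are genuinely $o(\log L)$, i.e. $O((\log L)^{1-\kappa})$ or better, rather than merely $o(L^{\boldsymbol{\alpha}})$. There are several competing sources: (i) the correction $O(|x-y|^{-\delta})$ from the sharp asymptotic, which contributes an integral $\int r^{-1-\delta}\,\mathrm dr$ that converges, hence only $O(1)$ — fine; (ii) replacing the lattice sum over $y$ by an integral, where one must be careful near $|x-y|\sim L^\varepsilon$ (the lower cutoff) because the integrand is largest there, but the near-region bound already handled by the symmetry argument controls this; (iii) the difference between the true cutoff $\Lambda_{L^\varepsilon}(x)$ (a cube in $|\cdot|_\infty$) and a Euclidean ball, which changes the $\log$ by an $O(1)$ amount that is harmless after dividing by $\log L$; and (iv) the Pizzetti expansion remainder $o(r^{\boldsymbol{\alpha}})$, which I would need to make quantitative, $O(r^{\boldsymbol{\alpha}+\theta})$ for some $\theta>0$ using smoothness of $g$, so that it integrates to $O(1)$ rather than $O(\log L)$. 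Assembling all of these into a single clean $O((\log L)^{-\kappa})$ relative error, and in particular verifying that the leading logarithmic coefficient does not depend on the arbitrary choice of $\varepsilon$ (it does not, because moving the cutoff from $L^{\varepsilon_1}$ to $L^{\varepsilon_2}$ changes the $r$-integral by $(\varepsilon_1-\varepsilon_2)\log L$ times the \emph{same} angular constant, and this is then subleading once we also track that the near-region bound scales with $\varepsilon$), is the part requiring the most care.
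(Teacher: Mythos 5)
Your overall strategy --- Taylor expansion of $g$ around $x/L$, cancellation of the odd-order term by lattice symmetry, insertion of the sharp asymptotic \eqref{eq: exact asymptotic}, extraction of the logarithm from the radial sum, and identification of the angular average with $H_{\boldsymbol{\alpha}/2}\Delta^{\boldsymbol{\alpha}/2}g$ via the Pizzetti-type computation of Lemma \ref{lem: taylor pizetti} --- is the same as the paper's. But there is a genuine error in how you treat the near region, and it corrupts the leading constant. When $\boldsymbol{\alpha}\in 2\NN^*$ one has $\lfloor\boldsymbol{\alpha}\rfloor_2+2=\boldsymbol{\alpha}$, so the bound you import from Proposition \ref{prop: case 1} degenerates: after the odd-order term cancels, the surviving order-$\boldsymbol{\alpha}$ Taylor term gives $L^{-\boldsymbol{\alpha}}\sum_{y\in\Lambda_{L^{\varepsilon}}(x)}|x-y|^{-d}=O(\varepsilon\,\log L\cdot L^{-\boldsymbol{\alpha}})$, not $O(L^{-\boldsymbol{\alpha}})$; after multiplying by $L^{\boldsymbol{\alpha}}/\log L$ this is of order $\varepsilon$, a constant. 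In fact it is not an error term at all: the order-$\boldsymbol{\alpha}$ term is of even order, survives the symmetry cancellation, and contributes $\varepsilon\log L$ times the \emph{same} angular constant (proportional to $H_{\boldsymbol{\alpha}/2}\Delta^{\boldsymbol{\alpha}/2}g(x/L)$) as the far region. Meanwhile your far region produces only $(1-\varepsilon)\log L$ times that constant, as your own radial integral $\int_{L^{\varepsilon-1}}^{R}\mathrm{d}r/r$ shows. Your claim that the $\varepsilon$-dependence ``washes out'' or is ``subleading'' is therefore false: as written, your argument yields the stated limit multiplied by $(1-\varepsilon)$.

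The fix is precisely what the paper does: take the near-region cutoff to be $\log L$ rather than $L^{\varepsilon}$ (any function growing slower than every power of $L$ would do). Then the near region contributes only $O(\log\log L\cdot L^{-\boldsymbol{\alpha}})$, i.e.\ $O(\log\log L/\log L)$ after rescaling, while the intermediate region $\log L\le |x-y|_\infty\le L$ captures the full logarithm via $\sum_{r=\lceil\log L\rceil}^{L}r^{-1}=\log L\,\big(1+O(\log\log L/\log L)\big)$. (The paper also isolates the region $|x-y|_\infty> L$ separately, where the Taylor polynomial grows like $|x-y|^{\boldsymbol{\alpha}-2}$ and the total contribution is $O(L^{-\boldsymbol{\alpha}})$; your ``bounded numerator'' remark glosses over this growth, though your restriction of the integral to $|\tilde y-\tilde x|\le R$ suggests you are aware of the issue.) With the logarithmic cutoff in place, the rest of your argument --- including the bookkeeping matching $2\pi^{d/2}/\Gamma(d/2)$ to the numerator of $H_{\boldsymbol{\alpha}/2}$ in \eqref{def: H_j} and the sign $(-1)^{\boldsymbol{\alpha}/2}$ from $(-\Delta)^{\boldsymbol{\alpha}/2}$ --- goes through and agrees with the paper's proof.
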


\begin{proof}
We use the same notations as in the proof of Proposition \ref{prop: case 1}, but with $L^\varepsilon$ replaced by $\log L$ in the definitions of $T_1(x,L)$ and $T_2(x,L)$. By Taylor's Theorem (notice that in this case $\lfloor \boldsymbol{\alpha}\rfloor_2+2=\boldsymbol{\alpha}$) and symmetry considerations, we have for some $C_1>0$,
\begin{equs}
    \sup_{x \in \ZZ^d}|T_1(x,L)|
    &\leq
    \frac{C_1\| \nabla^{\boldsymbol{\alpha}} g\|_{L^\infty}}{L^{\boldsymbol{\alpha}}} \sum_{y \in \Lambda_{\log L}(x)} \frac{1}{|x-y|^{d}}
    =
    O\Big(\frac{\log\log L}{L^{\boldsymbol{\alpha}}} \Big).
\end{equs}
Hence, by Riemann approximation,
\begin{equs}
\frac{L^{\boldsymbol{\alpha}}}{\log L}\frac{2^d}{\Sigma_L(\beta)}\sum_{x \in \ZZ^d} f(x/L) T_1(x,L)
=
O\Big( \frac{\log\log L}{\log L} \Big).
\end{equs}

We now analyse
\begin{equs}
\frac{L^{\boldsymbol{\alpha}}}{\log L}\frac{2^d}{\Sigma_L(\beta)}\sum_{x \in \ZZ^d} f(x/L) T_2(x,L).
\end{equs}
Fix $x \in \ZZ^d$ and write
\begin{equs}
T_2(x,L)
&=
T_{2,1}(x,L) + T_{2,2}(x,L),
\end{equs}
where
\begin{equs}
T_{2,1}(x,L)
&:=
\sum_{y \in (\Lambda_{L}(x))^c} \Big(g(y/L) - {\rm Tay}_{\lfloor\boldsymbol{\alpha}\rfloor_2}g(y/L;x/L) \Big) \langle \sigma_x \sigma_y \rangle_\beta,
\\
T_{2,2}(x,L)
&:=
\sum_{y \in (\Lambda_{\log L}(x))^c \cap (\Lambda_{L}(x))} \Big(g(y/L) - {\rm Tay}_{\lfloor\boldsymbol{\alpha}\rfloor_2}g(y/L;x/L) \Big) \langle \sigma_x \sigma_y \rangle_\beta.
\end{equs}
By Taylor's Theorem, there exists $C_2>0$ such that for all $x,y\in \mathbb Z^d$,
\begin{equs}
\left|g(y/L) - {\rm Tay}_{\lfloor\boldsymbol{\alpha}\rfloor_2}g(y/L;x/L)\right|
\leq
C_2\| \nabla^{\boldsymbol{\alpha}-2}g\|_{L^\infty} |x-y|^{\boldsymbol{\alpha}-2}L^{-\boldsymbol{\alpha}+2}.
\end{equs}
Hence, using \eqref{eq: spin decay} and translation invariance,
\begin{equs}
\sup_{x\in \mathbb Z^d} |T_{2,1}(x,L)|\leq C_3L^{-\boldsymbol{\alpha}+2}\sum_{y\in \Lambda_L^c}|y|^{\boldsymbol{\alpha}-2}\langle \sigma_0\sigma_y\rangle_\beta=O(L^{-\boldsymbol{\alpha}}).
\end{equs}
Moreover, by Taylor's Theorem and symmetry considerations,
\begin{equs}
T_{2,2}(x,L)
&=
\sum_{y \in (\Lambda_{\log L}(x))^c \cap (\Lambda_{L}(x))} \Bigg(\frac{1}{\boldsymbol{\alpha}!} \nabla^{\boldsymbol{\alpha}} g(x/L) (y/L-x/L,\dots,y/L-x/L)
\\
&\qquad\qquad\qquad\qquad \qquad \qquad 
+O\left(\| \nabla^{\boldsymbol{\alpha} +2}g\|_{L^\infty} |x-y|^{\boldsymbol{\alpha}+2}L^{-(\boldsymbol{\alpha}+2)}\right)\Bigg) \langle \sigma_x \sigma_y \rangle_\beta
\\
&=
\sum_{y \in (\Lambda_{\log L}(x))^c \cap (\Lambda_{L}(x))} \frac{1}{\boldsymbol{\alpha}!} \nabla^{\boldsymbol{\alpha}} g(x/L) (y/L-x/L,\dots,y/L-x/L) \langle \sigma_x \sigma_y \rangle_\beta + O(L^{-{\boldsymbol{\alpha}}}).
\end{equs}
Hence,
\begin{equs}
\frac{L^{\boldsymbol{\alpha}}}{\log L} T_{2,2}(x,L)
&=
\sum_{y \in (\Lambda_{\log L}(x))^c \cap (\Lambda_{L}(x))} \frac{\beta \chi(\beta)^2 \mathbf{C}}{2 \boldsymbol{\alpha}!} 
\\
&\qquad \qquad
\times \frac{1}{\log L}\nabla^{\boldsymbol{\alpha}} g(x/L) (y-x,\dots,y-x) \frac{1+O(|x-y|^{-\delta})}{|x-y|^{d+\boldsymbol{\alpha}}} + O((\log L)^{-1})
\\
&=
\frac{\beta \chi(\beta)^2 \mathbf{C}}{2 } \Big(1+O((\log L)^{-\delta})\Big)
\\
 &\qquad \qquad
 \times \sum_{\substack{ \gamma \in \NN^d \\|\gamma|_1=\boldsymbol{\alpha}}}\frac{1}{\gamma!}\partial^\gamma g(x/L)\frac{1}{\log L}\sum_{y \in (\Lambda_{\log L}(x))^c \cap (\Lambda_{L}(x))} \frac{(y-x)^\gamma}{|x-y|^{d+\boldsymbol{\alpha}}} + O((\log L)^{-1}),
\end{equs}
where in the second equality we use that, for $\tilde z \in \mathbb R^d$ and $j\geq 0$,
\begin{equs}
\frac{1}{j!}\nabla^{j} g(\tilde x)(\tilde z, \ldots, \tilde z)=\sum_{|\gamma|_1=j}\frac{1}{\gamma!}\partial^{\gamma}g(\tilde x)\tilde z^{\gamma},
\end{equs}
and where the sum is over multi-indices $\gamma=(\gamma_1,\ldots,\gamma_d)\in \mathbb N^d$,  $\partial^\gamma=\partial^{\gamma_1}_1\cdots \partial^{\gamma_d}_d$, $\tilde z^\gamma=\prod_{k=1}^d \tilde z_k^{\gamma_k}$, and $\gamma!:=\gamma_1!\cdots\gamma_d!$. Furthermore: i) note that by symmetry, we may restrict the above sum to $\gamma \in (2\NN)^d$; and, ii) up to an error of order $O(\log \log L/\log L)$, we can sum over $y\in (\Lambda_L(x))\setminus \lbrace x\rbrace$ above. 

For each $k \in \NN^*$ and $\tilde y \in \RR^d$, denote by
\begin{equs}
\SS(\tilde y,k)
=
\{ \tilde x \in \RR^d : k-1 \leq |\tilde x- \tilde y|< k\}
\end{equs}
the spherical shell (closed at the interior boundary and open at the exterior boundary) of radius $k$ and centred at $\tilde y \in \RR^d$. Observe that $\bigsqcup_{k}\{ \mathbb S(0,k) \cap \ZZ^d \}=\mathbb Z^d$.
Thus, for any $\gamma \in (2\NN)^d$ such that $|\gamma|_1=\boldsymbol{\alpha}$, we have
\begin{equs}
\sum_{y \in(\Lambda_L(x))\setminus \lbrace x\rbrace} \frac{\prod_{i=1}^d (y_i-x_i)^{\gamma_i}}{|x-y|^{d+\boldsymbol{\alpha}}}
&=
\sum_{r=1}^L \frac{1}{r}\frac{1}{r^{d-1}} \sum_{y \in  \mathbb S(x,r)\cap (\Lambda_L(x))} \prod_{i=1}^d \left(\frac{y_i-x_i}{|x-y|}\right)^{\gamma_i}
\\
&=
\log L \cdot \Omega_d\int_{\tilde y\in S(0,1)}\tilde y_1^{\gamma_1}\ldots \tilde y_d^{\gamma_d}\mathrm{d}\mu(\tilde y)\Big(1+O(L^{-1})\Big),
\end{equs}
where the last line is by Riemann approximation, $\mu$ is the normalised Haar measure on the sphere $S(0,1)$, and $\Omega_d$ is the surface area of $S(0,1)$, explicitly given by
\begin{equs}
\Omega_d
:=
\frac{2\pi^{d/2}}{\Gamma(d/2)}.
\end{equs}
We have thus obtained,
\begin{equs}
\frac{L^{\boldsymbol{\alpha}}}{\log L} T_{2,2}(x,L)
=
\frac{\beta \chi(\beta)^2 \mathbf{C}}{2}\Omega_d \sum_{|\gamma|_1=\boldsymbol{\alpha}}\frac{1}{\gamma!}\partial^\gamma g(x/L)\int_{\tilde y\in S(0,1)}\tilde y_1^{\gamma_1}\ldots \tilde y_d^{\gamma_d}\mathrm{d}\mu(\tilde y)\Big(1+O\left((\log L)^{-\delta}\right)\Big).
\end{equs}
Using symmetry considerations for the spherical integrals above as in the proof of Lemma \ref{lem: taylor pizetti}, we have
\begin{equs}
\frac{L^{\boldsymbol{\alpha}}}{\log L} T_{2,2}(x,L)
=\frac{\beta \chi(\beta)^2 \mathbf{C}}{2}H_{\boldsymbol{\alpha}/2}\Delta^{\boldsymbol{\alpha}/2}g(\tilde{x})\Big(1+O\left((\log L)^{-\delta}\right)\Big).
\end{equs}
Hence,
\begin{equs}
\frac{L^{\boldsymbol{\alpha}}}{\log L} \Big\langle \cR_{\boldsymbol{\alpha},\beta,L}\left[T_{f,L,\beta}(\sigma) T_{g,L,\beta}(\sigma)\right]\Big\rangle_\beta
&=\frac{\beta \chi(\beta)\mathbf{C}}{2}H_{\boldsymbol{\alpha}/2}\int_{\mathbb R^d}f(\tilde x)\Delta^{\boldsymbol{\alpha}/2}g(\tilde x)\mathrm{d}\tilde x\Big(1+O\left((\log L)^{-\delta}\right)\Big).
\end{equs}
\end{proof}

\subsection{Proof of emergence of fractional GFF correlations}

\begin{proof}[Proof of Theorem \textup{\ref{thm: fgff}}]
The proof of Theorem \ref{thm: fgff} is a direct consequence of Propositions \ref{prop: case 1} and \ref{prop: case 2}, together with Lemma \ref{lem: taylor pizetti}.
\end{proof}

\appendix
\section{Rotational symmetry and Taylor's theorem}

\begin{lem}\label{lem: taylor pizetti} Let $d\geq 2$ and $\boldsymbol{\alpha}>0$. Then, for any $f,g\in \cC^\infty_c(\mathbb R^d)$,
\begin{equs}
\int_{\mathbb R^d\times \mathbb R^d}&|\tilde x-\tilde y|^{-(d+\boldsymbol{\alpha})}\left(f(\tilde x)g(\tilde y)-f(\tilde x){\rm Tay}_{\lfloor\boldsymbol{\alpha}\rfloor_2}g(\tilde y;\tilde x)\right)\mathrm{d}\tilde x\mathrm{d}\tilde y
\\
&= 
\int_{\RR^d \times \RR^d} |\tilde x-\tilde y|^{-(d+\boldsymbol{\alpha})} \Big( f(\tilde x)g(\tilde y) - \sum_{j=0}^{\lfloor\boldsymbol{\alpha}\rfloor_2/2} f(\tilde x) H_j \Delta^j g(\tilde x) |\tilde x -\tilde y|^{2j} \Big)\mathrm{d} \tilde x\mathrm{d}\tilde y
\end{equs}
where the $H_j$'s are defined by \eqref{def: H_j}. 
\end{lem}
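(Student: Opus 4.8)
The plan is to reduce the identity to a pointwise statement about the spherical average of the Taylor remainder. Since both sides share the term $\int |\tilde x - \tilde y|^{-(d+\boldsymbol{\alpha})} f(\tilde x) g(\tilde y)\,\mathrm d\tilde x\,\mathrm d\tilde y$, it suffices to show that
\begin{equs}
\int_{\RR^d \times \RR^d} |\tilde x - \tilde y|^{-(d+\boldsymbol{\alpha})} f(\tilde x)\, {\rm Tay}_{\lfloor\boldsymbol{\alpha}\rfloor_2} g(\tilde y; \tilde x)\,\mathrm d\tilde x\,\mathrm d\tilde y
=
\int_{\RR^d \times \RR^d} |\tilde x - \tilde y|^{-(d+\boldsymbol{\alpha})} \sum_{j=0}^{\lfloor\boldsymbol{\alpha}\rfloor_2/2} f(\tilde x) H_j \Delta^j g(\tilde x) |\tilde x - \tilde y|^{2j}\,\mathrm d\tilde x\,\mathrm d\tilde y.
\end{equs}
First I would fix $\tilde x$ and pass to the variable $\tilde z = \tilde y - \tilde x$, then split the $\tilde z$-integral into radial and angular parts, writing $\tilde z = r\omega$ with $r = |\tilde z|$ and $\omega \in S(0,1)$. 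Recalling ${\rm Tay}_{\lfloor\boldsymbol{\alpha}\rfloor_2} g(\tilde y;\tilde x) = \sum_{m=0}^{\lfloor\boldsymbol{\alpha}\rfloor_2} \tfrac{1}{m!}\nabla^m g(\tilde x)(\tilde z,\dots,\tilde z)$, the key observation is that $\int_{S(0,1)} \nabla^m g(\tilde x)(\omega,\dots,\omega)\,\mathrm d\mu(\omega)$ vanishes for odd $m$ by the symmetry $\omega \mapsto -\omega$, so only even orders $m = 2j$ survive.

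The core computational step is the classical spherical-average (Pizzetti-type) identity: for each even $m = 2j$,
\begin{equs}
\int_{S(0,1)} \nabla^{2j} g(\tilde x)(\omega,\dots,\omega)\,\mathrm d\mu(\omega)
=
\frac{(2j)!}{2^j j!\, d(d+2)\cdots(d+2j-2)}\, \Delta^j g(\tilde x).
\end{equs}
I would prove this by expanding $\nabla^{2j}g(\tilde x)(\omega,\dots,\omega) = \sum_{|\gamma|_1 = 2j} \tfrac{(2j)!}{\gamma!}\partial^\gamma g(\tilde x)\, \omega^\gamma$ and invoking the standard formula for monomial moments $\int_{S(0,1)} \omega^\gamma\,\mathrm d\mu(\omega)$ against the sphere (nonzero only when every $\gamma_i$ is even, with an explicit product of Gamma factors), then recognizing that the resulting combinatorial sum over $\gamma$ reproduces $\Delta^j g(\tilde x)$ up to the stated constant — this is exactly the ``symmetry considerations for the spherical integrals'' referenced in the proof of Proposition~\ref{prop: case 2}. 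Multiplying through by $\tfrac{1}{(2j)!}$ and comparing with \eqref{def: H_j}, one checks that $\tfrac{1}{(2j)!} \Omega_d^{-1}\cdot\Omega_d\cdot\tfrac{(2j)!}{2^j j! d(d+2)\cdots(d+2j-2)} = H_j$, so the angular integral of the $m=2j$ term of the Taylor polynomial is precisely $H_j \Delta^j g(\tilde x)$.

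Assembling: the left-hand double integral becomes, after the $\tilde z$-substitution and separation of variables,
\begin{equs}
\int_{\RR^d} f(\tilde x) \Big( \int_0^\infty r^{-(d+\boldsymbol{\alpha})} r^{d-1} \sum_{j=0}^{\lfloor\boldsymbol{\alpha}\rfloor_2/2} r^{2j}\, \Omega_d\, H_j \Delta^j g(\tilde x)\,\mathrm dr \Big)\mathrm d\tilde x,
\end{equs}
which — matching term by term against the right-hand side written in the same radial/angular coordinates, where $\sum_j f(\tilde x) H_j \Delta^j g(\tilde x)|\tilde x - \tilde y|^{2j}$ integrates against $r^{-(d+\boldsymbol{\alpha})} r^{d-1}\Omega_d$ over $(r,\omega)$ — gives equality identically in $r$, hence the claim. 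The main obstacle is purely bookkeeping: verifying that the combinatorial identity relating $\sum_{|\gamma|_1=2j}\binom{2j}{\gamma}\partial^\gamma g \cdot (\text{moment of }\omega^\gamma)$ to $\Delta^j g$ carries exactly the constant in \eqref{def: H_j}, and tracking the $\Omega_d$ normalization between $\mu$ (Haar probability) and the raw surface measure. I would also note that one need not worry about convergence of the radial integral near $r=0$: after subtracting the Taylor polynomial of order $\lfloor\boldsymbol{\alpha}\rfloor_2$, the integrand on the original combined expression is $O(r^{\lfloor\boldsymbol{\alpha}\rfloor_2 + 1 - \boldsymbol{\alpha} - 1})$-integrable near the diagonal since $\lfloor\boldsymbol{\alpha}\rfloor_2 + 1 > \boldsymbol{\alpha}$ (and similarly the large-$r$ behavior is controlled by compact support of $g$), so the manipulations above — carried out termwise on each side separately, where individual terms may be only conditionally meaningful — are justified by grouping back into the convergent combination at the end.
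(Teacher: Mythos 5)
Your overall strategy is the same as the paper's: fix $\tilde x$, pass to polar coordinates in $\tilde z=\tilde y-\tilde x$, kill the odd-order Taylor terms by the symmetry $\omega\mapsto-\omega$, and identify the spherical average of the even-order terms with $H_j\Delta^j g(\tilde x)$ via the moment formula for monomials on the sphere and the multinomial identity for $\Delta^j$. That computational core is correct and is exactly what the paper does (it is the content of \eqref{eq: eq pizetti taylor} there).

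There is, however, a genuine error in your justification of the rearrangement. You claim the combined integrand is absolutely integrable near the diagonal because $\lfloor\boldsymbol{\alpha}\rfloor_2+1>\boldsymbol{\alpha}$. This inequality is false whenever $\boldsymbol{\alpha}-\lfloor\boldsymbol{\alpha}\rfloor_2\geq 1$: for instance $\boldsymbol{\alpha}=3/2$ gives $\lfloor\boldsymbol{\alpha}\rfloor_2=0$, the remainder $g(\tilde y)-g(\tilde x)$ is only $O(|\tilde z|)$ pointwise, and $|\tilde z|^{-(d+3/2)}\cdot|\tilde z|$ is \emph{not} absolutely integrable near $\tilde z=0$. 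In that regime the left-hand side of the lemma is only conditionally convergent, so your plan of "termwise manipulation, justified by grouping back into the convergent combination" has nothing absolutely convergent to group back into, and the individual pieces of your "it suffices to show" reduction are all divergent at the origin. The repair is to regularize first --- integrate over $B(\tilde x,\varepsilon)^c$, where every term is separately finite, carry out the angular computation there, and then let $\varepsilon\downarrow 0$ (this is how the paper proceeds). The $\varepsilon\downarrow 0$ limit converges not because of the pointwise remainder bound but because the order-$(\lfloor\boldsymbol{\alpha}\rfloor_2+1)$ term of the next Taylor expansion has vanishing spherical average, so that \emph{after} angular integration the radial integrand is $O(r^{\lfloor\boldsymbol{\alpha}\rfloor_2+1-\boldsymbol{\alpha}})$ with $\lfloor\boldsymbol{\alpha}\rfloor_2+2>\boldsymbol{\alpha}$ (which \emph{does} hold by definition of $\lfloor\cdot\rfloor_2$ for $\boldsymbol{\alpha}\notin 2\NN^*$); this is the same parity cancellation you already use for the polynomial terms, but you need to invoke it for the remainder as well. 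Once this is in place, the rest of your bookkeeping (the $\Omega_d$ normalization between the Haar probability measure $\mu$ and the raw surface measure, and the constant in \eqref{def: H_j}) goes through as you describe.
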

\begin{proof}
For $\tilde x\in \mathbb R^d$, 
\begin{multline*}
\int_{\mathbb R^d}|\tilde x-\tilde y|^{-(d+\boldsymbol{\alpha})}\left(g(\tilde y)-{\rm Tay}_{\lfloor\boldsymbol{\alpha}\rfloor_2}g(\tilde y;\tilde x)\right)\mathrm{d}\tilde y
\\=
\int_{\mathbb R^d}|\tilde x-\tilde y|^{-(d+\boldsymbol{\alpha})}\left(g(\tilde y)-\sum_{j=0}^{\lfloor \boldsymbol{\alpha}\rfloor_2/2}\frac{1}{(2j)!}\nabla^{2j}g(\tilde x)(\tilde y-\tilde x,\ldots, \tilde y-\tilde x)\right)\mathrm{d}\tilde y
\end{multline*}
If $\varepsilon>0$, notice that for $j\in \lbrace 0,\ldots,\lfloor \boldsymbol{\alpha}\rfloor_2/2\rbrace$,
\begin{multline*}
\frac{1}{(2j)!}\int_{B(x,\varepsilon)^c}|\tilde x-\tilde y|^{-(d+\boldsymbol{\alpha})}\nabla^{2j}g(\tilde x)(\tilde y-\tilde x,\ldots, \tilde y-\tilde x)\mathrm{d}\tilde y
\\=
\Omega_d\int_{\varepsilon}^\infty \frac{r^{d-1}}{r^{d+\boldsymbol{\alpha}}}\mathrm{d}r\sum_{|\gamma|_1=2j}\frac{1}{\gamma!}\partial_1^{\gamma_1}\cdots \partial_d^{\gamma_d}g(\tilde x)\int_{S(0,1)}\tilde z_1^{\gamma_1}\ldots \tilde z_d^{\gamma_d}\mathrm{d}\mu(\tilde z).
\end{multline*}
Note that in the sum above, by symmetry considerations we may restrict to multi-indices $\gamma$ such that $\gamma_k$ is even for all $1\leq k \leq d$. 

By a standard computation,
\begin{equs}
\int_{S(0,1)}\tilde z_1^{\gamma_1}\ldots \tilde z_d^{\gamma_d}\mathrm{d}\mu(\tilde z)=\frac{1}{ d(d+2)\cdots (d+2j-2)}\prod_{k=1}^d(\gamma_k-1)!!,
\end{equs}
where we recall that for $\ell\in \mathbb  N$ odd, $\ell!!=\ell(\ell-2)\cdots 1$. It remains to show that
\begin{equs}\label{eq: eq pizetti taylor}
\sum_{|\gamma|_1=2j, \: \gamma_k\textup{ even}}\frac{1}{\gamma!}\partial_1^{\gamma_1}\cdots \partial_d^{\gamma_d}g(\tilde x)\prod_{k=1}^d(\gamma_k-1)!!
=\frac{1}{2^j j!}\Delta^{j}g(\tilde{x}).
\end{equs}
Indeed, notice that for $|\gamma|_1=2j$ with $\gamma_k$ even for $1\leq k \leq d$,
\begin{equs}
\prod_{k=1}^d(\gamma_k-1)!!=\frac{1}{2^{j}}\prod_{k=1}^d\frac{\gamma_k !}{(\gamma_k/2)!},
\end{equs}
so that the left-hand side in \eqref{eq: eq pizetti taylor} rewrites 
\begin{equs}
\sum_{|\gamma|_1=2j, \: \gamma_k\textup{ even}}\frac{1}{\gamma!}\partial_1^{\gamma_1}\cdots \partial_d^{\gamma_d}g(\tilde x)\prod_{k=1}^d(\gamma_k-1)!!
=\frac{1}{2^j}\sum_{|\gamma|_1=2j, \: \gamma_k\textup{ even}}\partial_1^{\gamma_1}\cdots \partial_d^{\gamma_d}g(\tilde x)\prod_{k=1}^d\frac{1}{(\gamma_k/2)!}
\end{equs}
Finally, since
\begin{equs}
\Delta^{j}g(\tilde x)=\sum_{i_1,\ldots,i_j\in \lbrace 1,\ldots,d\rbrace}\partial^{2j}_{i_1i_1\ldots i_ji_j}g(\tilde x),
\end{equs}
we get
\begin{eqnarray*}
\sum_{|\gamma|_1=2j, \: \gamma_k\textup{ even}}\partial_1^{\gamma_1}\cdots \partial_d^{\gamma_d}g(\tilde x)\prod_{k=1}^d\frac{1}{(\gamma_k/2)!}
&=&\sum_{|\gamma|_1=2j, \: \gamma_k\textup{ even}}\partial_1^{\gamma_1}\cdots \partial_d^{\gamma_d}g(\tilde x)\binom{j}{\gamma_1/2,\ldots,\gamma_d/2}\frac{1}{j!}
\\&=&\frac{1}{j!}\sum_{|\gamma|_1=2j, \: \gamma_k\textup{ even}}\sum_{\substack{i_1,\ldots,i_j\in \lbrace 1,\ldots,d\rbrace\\|\lbrace 1\leq \ell \leq j, \: i_\ell=k\rbrace|=\gamma_k/2}}\partial^{2j}_{i_1i_1\ldots i_ji_j}g(\tilde{x})
\\&=&\frac{1}{j!}\Delta^jg(\tilde x).
\end{eqnarray*}
Using the definition of $H_j$ \eqref{def: H_j},
\begin{equs}
\int_{B(x,\varepsilon)^c}|\tilde x-\tilde y|^{-(d+\boldsymbol{\alpha})}\nabla^{2j}g(\tilde x)(\tilde y-\tilde x,\ldots, \tilde y-\tilde x)\mathrm{d}\tilde y
=
\int_{B(x,\varepsilon)^c}|\tilde x-\tilde y|^{-(d+\boldsymbol{\alpha})}H_j\Delta^jg(\tilde x)|\tilde x-\tilde y|^{2j}\mathrm{d}\tilde y.
\end{equs}
In order conclude, we write 
\begin{equs}
\int_{\mathbb R^d}|\tilde x-\tilde y|^{-(d+\boldsymbol{\alpha})}\left(g(\tilde y)-{\rm Tay}_{\lfloor\boldsymbol{\alpha}\rfloor_2}g(\tilde y;\tilde x)\right)\mathrm{d}\tilde y
&=
\lim_{\epsilon \downarrow 0} \int_{B(x,\epsilon)^c}|\tilde x-\tilde y|^{-(d+\boldsymbol{\alpha})}\left(g(\tilde y)-{\rm Tay}_{\lfloor\boldsymbol{\alpha}\rfloor_2}g(\tilde y;\tilde x)\right)\mathrm{d}\tilde y,
\end{equs}
use the considerations above, and take $\varepsilon$ to $0$.
\end{proof}

\endappendix

\bibliographystyle{alpha}
\bibliography{ref.bib}

\end{document}